\theoremstyle{plain}
\newtheorem{theorem}{Theorem}[section]
\newtheorem{lemma}[theorem]{Lemma}
\newtheorem*{main}{Main Theorem}
\newtheorem{claim}{Claim}
\theoremstyle{definition}
\newtheorem*{def*}{Definition}
\newtheorem{remark}[theorem]{Remark}
\newtheorem{definition}[theorem]{Definition}
\newcommand{\R}{\mathbb{R}}
\newcommand{\C}{\mathscr{C}}
\author{Elias  Rego}
\address{Department of Applied Mathematics, AGH University of Science and Technology, Krakow, Poland.}
\email{rego@agh.edu.pl}
\author{Kendry J. Vivas}
\address{Departamento de Matem\'aticas, Universidad Cat\'olica del Norte, Antofagasta, Chile.}
\email{kendry.vivas01@ucn.cl}
\subjclass[2020]{Primary: 37C70, 37C10, 37C20, 37D45.}
\keywords{Flows, Robustly Transitive, Homoclinic Class, Sectional Hyperbolic, Star Flows}
\title{A trichotomy for generic sectional-hyperbolic chain-recurrent classes.}
\begin{document}

\begin{abstract}
The notion of sectional-hyperbolicity is a weakened form of hyperbolicity introduced for vector fields in order to understand the dynamical behavior of certain higher-dimensional systems such as the multidimensional Lorenz attractor. In this paper we address the  questions proposed in [\emph{Math. Z.}, \textbf{298} (2021), 469-488] and we provide a partial answer by proving that a $C^1$-generic non-trivial sectional-hyperbolic chain-recurrent class, not necessarily Lyapunov stable, satisfies a trichotomy: it is either a homoclinic loop, a union of saddle connections between singularities, or it is robustly a homoclinic class. 
\end{abstract}

\maketitle

\section{Introduction} 

Differentiable dynamical systems have become a fruitful and important research field in recent decades. The birth of this discipline is attributed to the seminal work of S. Smale \cite{S}, where the notion of hyperbolic set was introduced, presenting Smale's Horseshoe as a representative example. Since then, a very strong program towards the understanding of the global dynamics of dynamical systems has been initiated,  yielding  a rich theory where several important advances were achieved, including a better understanding of stability phenomena and of chaotic dynamics from both topological and statistical point of view. 
Later, motivated by the chaotic dynamics of the strange attractor in Lorenz's polynomial system
\begin{displaymath}
\left\lbrace \begin{array}{lll}
\dot{x}&=& \sigma(y-x) \\
\dot{y}&=& \rho x-y-xz \\
\dot{z}&=& xy-\beta z,
\end{array}\right.
\end{displaymath}
where $\sigma\approx 10$, $\beta\approx 8/3$ and $\rho\approx28$,  \cite{ABS} and \cite{G} introduced, independently, a geometric model now known as the geometric Lorenz attractor (GLA). One of its main features is the presence of a unique singularity that is accumulated by regular orbits, which causes the system to fail to be hyperbolic. Nevertheless, the GLA exhibits rich dynamical behavior reminiscent of hyperbolic sets, including a dense set of periodic orbits and robust transitivity. Moreover, it was shown in \cite{B} that the GLA is a homoclinic class. Motivated by this example, the notion of singular-hyperbolicity was introduced in \cite{Mo3}, where it is proven that singular-hyperbolic sets properly extend the classical notion of hyperbolic sets, incorporating the GLA as a prototypical case.

In order to gain a better understanding of the dynamics of higher-dimensional sets such as the multidimensional Lorenz attractor \cite{BPV}, C. Morales and R.J. Metzger introduced the concept of sectional-hyperbolic set in \cite{Me3}. Both notions, sectional-hyperbolicity and singular-hyperbolicity, agree for three-dimensional vector fields, but in the higher-dimensional setting sectional-hyperbolicity is stronger than singular-hyperbolicity (see \cite{Me3} and \cite{Sal} for more details).

Now, in topological dynamics, the chain-recurrent set constitutes an important subject of study because it encompasses all the interesting dynamics of the system. Moreover, by considering an equivalence relation, this  set is decomposed into pieces called chain-recurrence classes. The fundamental theorem of dynamical systems, due to Conley \cite{Co}, asserts the existence of a Lyapunov function which is constant along these classes. In this way, this result provides a general procedure for describing the global dynamics of a system. These sets are the central focus of this work and hence we now recall this concept. 

 Hereafter, we denote by $M$ a $n$-dimensional compact Riemannian manifold, endowed with a Riemannian metric $\Vert\cdot\Vert$. We will always assume $n\geq 4$. Denote by $d$ the metric on $M$ induced by its  Riemannian metric. Throughout this text, $\mathcal{X}^1(M)$ denotes the set of $C^1$-vector fields on $M$ endowed with the $C^1$-topology.  It is well known that any $X\in \mathcal{X}^1(M)$ induces a $C^1$-flow that will be denoted by the one-parameter family of maps $\lbrace X_t\rbrace_{t\in\mathbb{R}}$. The {\it{orbit}} of a point $x\in M$ is the  set $$\mathcal{O}(x)=\lbrace X_t(x) : t\in\mathbb{R}\rbrace.$$ For $a,b\in\mathbb{R}$, the \textit{orbit segment from $a$ to $b$} of a point $x$ is defined by $X_{[a,b]}(x)=\lbrace X_t(x) : t\in[a,b]\rbrace$. A point $x\in M$ is said to be a \textit{singularity} of  $X$ if $X(x)=0$.  We will denote the set of singularities of $X$ by $Sing(X)$. A point $x\in M\setminus Sing(X)$ is a  {\it{periodic point }} of $X$ if there is $t>0$ such that $X_t(x)=x$. Denote by $Per(X)$ the set of periodic points of $X$. The set of \textit{critical elements} of $X$ is given by $Crit(X)=Sing(X)\cup Per(X)$. An orbit that does not belong to $Crit(X)$ is called a \textit{regular orbit}. As usual, we say that a subset  $\Lambda$ of $M$ is {\it{invariant}} if $X_t(\Lambda)=\Lambda$  for any $t\in\mathbb{R}$. We say that a compact invariant set $\Lambda$ is {\it{Lyapunov stable}} if for every neighborhood $U$ of $\Lambda$ there exists a neighborhood $V\subset U$ of $\Lambda$ such that $X_t(V)\subset U$, for any $t>0$. We say that a compact and invariant set $\Lambda$ is transitive if it contains a point whose orbit is dense in $\Lambda$. 
 
Let us now define the chain-recurrent sets. There are several equivalent ways of defining the chain-recurrent set (see \cite[Theorem 2.7.18]{AN}), here we choose the form that is most suitable to our purposes. For $\varepsilon>0$ and  $T>0$, we say that a finite sequence $( x_i,t_i)_{i=0}^n$ is an \textit{$(\varepsilon, T)$-chain} if  $t_0+\cdots+t_n\geq T$, $t_i\geq 1$, and $d(X_{t_i}(x_i), x_{i+1}) <\varepsilon$ for any $i=0,..., n-1$. Besides, we say that $y$ is \textit{chain attainable from} $x$, and we denote it by $x\sim y$, if for any $\varepsilon,T>0$ there exists an $(\varepsilon,T)$-chain from $x$ to $y$, i.e. $x_0=x$ and $x_n=y$. When $x\sim x$, we say that $x$ is a \textit{chain-recurrent point}. The chain-recurrent set of $X$ is defined as $$CR(X)=\lbrace x\in M: x\sim x\rbrace.$$ 
It is well known that $CR(X)$ is a compact and invariant set. Moreover, $\sim$ is an equivalence relation on $CR(X)$. So, each equivalent class under this relation is called \textit{chain-recurrent class}. It is easy to see that each chain-recurrent class of $X$ is also a compact and invariant set. When $x\in CR(X)$, we denote by $C(x)$ its chain-recurrence class. We say that a chain-recurrent class is \textit{non-trivial} if it is not reduced to either a periodic orbit or a singularity.  We say that a chain-recurrent class is an \textit{aperiodic class} if it does not contain periodic orbits.

Several generic properties of these chain-recurrence classes were obtained. For instance, the well-known Kupka-Smale Theorem states that critical orbits of generic vector fields are hyperbolic. Later, in \cite{BC}, it was shown that for $C^1$-generic systems, chain-recurrence classes with periodic orbits coincide with the homoclinic class of some of such periodic orbits.   Moreover, when some form of hyperbolicity is present, we obtain more interesting dynamical properties. Indeed, according to \cite{PYY}, for $C^1$-generic vector fields, a non-trivial Lyapunov stable chain-recurrence class that is sectional-hyperbolic is necessarily a transitive attractor, and hence, a homoclinic class. In \cite{GYZ}, the same conclusion was obtained for chain-recurrence classes, not necessarily singular-hyperbolic, associated to $C^1$-generic vector fields away from homoclinic tangencies.  In \cite{CY}, S. Crovisier and D. Yang proved that transitivity is present in a robust way for sectional-hyperbolic Lyapunov stable chain-recurrence classes associated to vector fields $X$ in a certain $C^1$-generic set. In light of this result, the authors posed the following question:

\vspace{0.1in}
\textbf{Question:} Let $M$ be a compact Riemannian manifold with dimension at least $4$. Does there exist an open and dense set $\mathcal{U}\subset\mathcal{X}^1(M)$ such that for any $X\in \mathcal{U}$, any non-trivial sectional-hyperbolic chain-recurrent class is robustly a homoclinic class?
\vspace{0.1in}

In this work, we explore this question, and we give a partial answer to this question. 
In order to state our main results, let us begin by recalling the concept of sectional-hyperbolic set. We say that a compact invariant set $\Lambda$ has a {\it{dominated splitting}} if there is a continuous invariant  splitting $T_{\Lambda}M=E\oplus F$ (with respect to the tangent flow $\Phi_t= DX_t$, $t\in\mathbb{R}$) and  constants $K,\lambda>0$ satisfying the relation
\begin{displaymath}
\frac{\Vert \Phi_t(x)\vert_{E_x}\Vert}{m(\Phi_t(x)\vert_{F_x})}\leq Ke^{-\lambda t}, \quad \forall x\in\Lambda,\forall t>0,
\end{displaymath} 
where $m(A)$ denotes the co-norm of a linear transformation $A$. In this case, we say that $E$ is {\it{dominated}} by $F$. When the subbundle $E$ is uniformly contracting, i.e., $\Vert \Phi_t(x)\vert_{E_x}\Vert\leq Ke^{-\lambda t}$ for every $t>0$ and $x\in\Lambda$, we say that $\Lambda$ is \textit{partially hyperbolic}. 
In \cite{PYY} it was defined the notion of \textit{sectional-hyperbolic} set as follows:
\begin{definition}\label{def: sec-hyp}
    A compact invariant set $\Lambda\subset M$ is sectional-hyperbolic if it is partially hyperbolic and its central bundle  $F$ is  \textit{sectional-expanding}, i.e, there are $K,\lambda>0$ such that for every two-dimensional subspace $L_x$ of $F_x$ one has
\begin{displaymath}
\vert \text{det}\Phi_t(x)\vert_{L_x}\vert\geq Ke^{\lambda t},\quad\forall x\in\Lambda,\forall t> 0.
\end{displaymath}    
\end{definition}

\begin{remark}\label{rmk}
The notion of sectional-hyperbolicity was first introduced in \cite{Me3} to describe the dynamical behavior of higher-dimensional systems, such as the multidimensional Lorenz attractor (see \cite{BPV}). That definition differs from Definition \ref{def: sec-hyp} in that it assumes the hyperbolicity of singularities, a condition we do not require here. However, as we shall show in Lemma \ref{lemma: hypsing}, every singularity contained in a sectional-hyperbolic chain-recurrent class (under Definition \ref{def: sec-hyp}) is necessarily hyperbolic. Thus, this distinction poses no real restriction when dealing with chain-recurrence classes.
\end{remark}

Next, let us clarify the notion of  \emph{robustly homoclinic class}.
it is immediate to verify that \( \operatorname{Crit}(X) \subset CR(X) \), so the chain-recurrent class of any critical element is well defined. Moreover, by the previous remark and the hyperbolic lemma (see Lemma \ref{hyplemma}), every critical element of a sectional-hyperbolic chain-recurrent class is hyperbolic. If \( \gamma_X \) is a critical element of \( X \) and \( C(\gamma_X) \) is sectional-hyperbolic, then for every \( Y \in \mathcal{X}^1(M) \) sufficiently \( C^1 \)-close to \( X \), there exists a continuation \( \gamma_Y \) of \( \gamma_X \). The corresponding chain-recurrent class \( C(\gamma_Y) \) is called the \emph{continuation} of \( C(\gamma_X) \). We say that the chain-recurrent class \( C(\gamma_X) \) is \emph{robustly a homoclinic class} if there exists an open \( C^1 \)-neighborhood \( \mathcal{U} \) of \( X \) such that for every \( Y \in \mathcal{U} \), the continuation \( C(\gamma_Y) \) is a homoclinic class. The  main result of this paper is the following: 

\begin{main}\label{mainA}
There exists a $C^1$-generic set $\mathcal{R}\subset \mathcal{X}^1(M)$ such that any non-trivial sectional-hyperbolic chain-recurrent class $\Lambda$ satisfies the following trichotomy:
\begin{enumerate}
    \item $\Lambda$ is a homoclinic loop. 
    \item $\Lambda$ consists of saddle connections between singularities of $\Lambda$. 
    \item $\Lambda$ is robustly a homoclinic class. 
\end{enumerate} 
\end{main}
  Notice that in the main result above we are not assuming the class is Lyapunov stable. Indeed, if the class is Lyapunov stable, then \cite{CY} implies that the trichotomy is reduced to case (3) of the Main Theorem. Removing this assumption imposes substantial obstacles to the analysis of generic chain-recurrent classes (see \cite{PYY,GYZ, PYY2}). In this direction, our result allows one to recover robust homoclinic classes, provided the original classes are neither homoclinic loops nor a saddle connections. This shows that sectional-hyperbolicity, rather than Lyapunov stability, is the fundamental mechanism for the emergence of homoclinic structure in the generic setting.

The remainder of this text is organized as follows: In Section 2, we present a collection of preliminary results concerning sectional-hyperbolic chain-recurrence classes. In Section 3, we prove that any chain-recurrent class associated with $X\in\mathcal{R}$ which is not a homoclinic loop or a saddle connection is robustly periodic, which constitutes a key step toward proving the Main Theorem. Finally, Section 4 is devoted to proving our main result.

\section{Preliminary results on Sectional-Hyperbolic chain-classes}

In this section, we collect some tools that will be used in our analysis. Let us begin by recalling that  an important fact about chain-recurrence classes is that they are chain transitive: If $C$ is a chain-recurrent class, then for any pair of points $x,y\in C$  the point $y$ is chain attainable from $x$ through points in $C$, i.e., for every $\varepsilon, T>0$, there is an $(\varepsilon,T)$-chain $(x_i,t_i)_{i=0}^n$ such that $x_0 = x, x_n=y$ and   $x_i\in C$ for any $i=0,\ldots,
 n$ (see for instance \cite[Proposition 2.1]{FS}).

Next, we recall the concept of hyperbolic set. A compact invariant set $\Lambda$ is said to be a {\it hyperbolic set} if there is a $\Phi_t$-invariant splitting $T_\Lambda M=E^s\oplus \langle X\rangle \oplus E^u$, such that $E^s$ is of contracting type, $E^u$ is of expanding type and $\langle X\rangle $ is the direction generated by $X$. A hyperbolic set is of {\it saddle type} if both $E^s$ and $E^u$ are non-trivial. A critical element of $X$ is hyperbolic if its orbit is a hyperbolic set. Notice that when $x$ is a singularity, the subspace $\langle X \rangle$ is trivial. It is well known that sectional-hyperbolic sets satisfy the so-called hyperbolic lemma, whose proof can be found in \cite{BM}:  
\begin{lemma}[Hyperbolic Lemma]\label{hyplemma}
Every compact invariant set without singularities contained in a sectional-hyperbolic set is hyperbolic of saddle type. 
\end{lemma}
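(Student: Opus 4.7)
The plan is to use the ambient partially hyperbolic splitting $E \oplus F$ on $\Lambda$, which restricts to $\Gamma$, and to refine $F$ on $\Gamma$ by exploiting both sectional expansion and the fact that $\|X(\cdot)\|$ is bounded below on $\Gamma$ (since $\Gamma$ is compact and contains no singularities). The output will be a $\Phi_t$-invariant splitting $T_\Gamma M = E \oplus \langle X\rangle \oplus E^u$ of hyperbolic, saddle type.

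The first step is to show that the flow line $\langle X(x)\rangle$ lies inside $F_x$ for every $x\in\Gamma$. Writing $X(x) = v_E + v_F$ with $v_E\in E_x$, $v_F\in F_x$, the dominated-splitting inequality gives, for every $t>0$, a bound of the form $m(\Phi_{-t}|_{E_x}) \geq K^{-1}e^{\lambda t}\,\|\Phi_{-t}|_{F_x}\|$. If $v_E\neq 0$, then $\|\Phi_{-t} v_E\|/\|\Phi_{-t} v_F\|\to\infty$, so $\|\Phi_{-t}X(x)\| = \|X(X_{-t}x)\|\to\infty$ as $t\to\infty$, contradicting the uniform upper bound $\|X\|\leq \max_\Gamma\|X\|$. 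Hence $v_E=0$ and $X(x)\in F_x$.

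The second and main step is to construct a uniformly expanding, $\Phi_t$-invariant subbundle $E^u\subset F$ complementary to $\langle X\rangle$. I would pass to the induced cocycle on the quotient $N:=F/\langle X\rangle$ (equivalently, the linear Poincaré flow restricted to the $F$-directions). For any unit $w\in F_x$ transverse to $X(x)$, sectional expansion applied to the 2-plane $L_x = \langle X(x),w\rangle\subset F_x$ yields $|\det \Phi_t|_{L_x}|\geq K e^{\lambda t}$, while $\|\Phi_t X(x)\| = \|X(X_t x)\|$ is pinched between two positive constants on $\Gamma$. Dividing the area of $\Phi_t L_x$ by the length of $\Phi_t X(x)$ bounds the component of $\Phi_t w$ normal to $X(X_t x)$ from below by $K'e^{\lambda t}$. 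Thus the quotient cocycle on $N$ is uniformly expanding with explicit rate, and a standard cone-field/graph-transform argument produces a unique $\Phi_t$-invariant splitting $F|_\Gamma = \langle X\rangle\oplus E^u$ with $E^u$ tangent to the unstable cone and uniformly expanded.

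Combining the two steps yields the hyperbolic splitting $T_\Gamma M = E\oplus\langle X\rangle\oplus E^u$; it is of saddle type because $E$ is non-trivial by the definition of partial hyperbolicity, and $E^u$ is non-trivial since sectional expansion requires $\dim F\geq 2$. The step I expect to be the main obstacle is the construction of $E^u$: one has to pass from the merely asymptotic area-growth information provided by sectional expansion to genuine pointwise expansion of an invariant subbundle transverse to the flow. This is where the boundedness of $\|X\|$ on $\Gamma$ is essential, as it is precisely what converts sectional area expansion of $F$-planes containing $X$ into linear-Poincaré-flow expansion on the quotient $N$, after which the graph-transform machinery delivers the invariant subbundle $E^u$.
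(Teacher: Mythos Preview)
The paper does not give its own proof of this lemma; it simply cites \cite{BM}. Your argument is correct and is essentially the standard one found in that reference: show $X\in F$ on $\Gamma$ via domination (the paper itself reproduces this step inside the proof of Lemma~\ref{lemma: hypsing}), then convert sectional area expansion of planes $\langle X,w\rangle\subset F$ into uniform expansion of the linear Poincar\'e flow on $F/\langle X\rangle$ using the two-sided bound $0<\min_\Gamma\|X\|\le\|X\|\le\max_\Gamma\|X\|$, and finally lift to an invariant expanding complement $E^u$ to $\langle X\rangle$ inside $F$. The graph-transform step works precisely because the quotient cocycle expands at rate $e^{\lambda t}$ while the flow-direction cocycle $\|X(X_t x)\|/\|X(x)\|$ is bounded above, so the induced graph transform on sections $N\to\langle X\rangle$ is a uniform contraction for large $t$. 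An equivalent packaging, which avoids the explicit construction of $E^u$, is to note that what you have proved is hyperbolicity of the linear Poincar\'e flow on $N_\Gamma=\mathcal{E}\oplus\mathcal{F}$, and then invoke the classical equivalence (for compact invariant sets without singularities) between hyperbolicity of $\psi_t$ and hyperbolicity of the flow.
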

The above lemma shows that this notion generalizes the concept of uniform hyperbolicity introduced by Smale in \cite{S}. Furthermore, it tells us that this notion is good enough to obtain interesting dynamical results. 

In what follows, we prove some basic lemmas that will be instrumental in the proof of our main theorems. First, we prove the assertion made in Remark \ref{rmk}.

\begin{lemma}\label{lemma: hypsing}
Let $\Lambda$ be a non trivial sectional-hyperbolic chain-recurrent class for a $C^1$ vector field $X$ containing singularities. Then, every $\sigma\in \Lambda$ is hyperbolic. Moreover, if $T_{\sigma}M=E^{s}_\sigma\oplus E^u_\sigma$ is the hyperbolic splitting of $\sigma$ and $T_{\sigma}M=E_\sigma\oplus F_\sigma$ is the hyperbolic splitting of $\sigma$ given by sectional-hyperbolicity, we have
\begin{displaymath}
dim\,E_\sigma+1=dim\,E^s_\sigma. 
\end{displaymath}
\end{lemma}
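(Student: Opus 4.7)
The plan is to split the proof into two parts: first, hyperbolicity of $\sigma$, and second, the dimension identity. Set $A := DX(\sigma)$ and note that the splitting $E_\sigma \oplus F_\sigma$ is $A$-invariant (since both $E$ and $F$ are $\Phi_t$-invariant), and that uniform contraction of $E$ along $\Lambda$ forces $\mathrm{Re}(\mu) < 0$ for every eigenvalue $\mu$ of $A|_{E_\sigma}$; in particular $E_\sigma \subseteq E^s_\sigma$.

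For hyperbolicity I would show that $A|_{F_\sigma}$ has no eigenvalue on the imaginary axis. If $\alpha = i\omega$ with $\omega \neq 0$ were such an eigenvalue, then the real $2$-dimensional $A$-invariant subspace $L \subseteq F_\sigma$ associated to the conjugate pair $\{i\omega, -i\omega\}$ would satisfy $|\det \Phi_t(\sigma)|_L| = 1$ for all $t$, contradicting the sectional-expansion of $F_\sigma$. The zero-eigenvalue case is more delicate, since elementary linear examples show that sectional-expansion by itself is compatible with a zero eigenvalue in $F_\sigma$. Here I would invoke both the non-triviality and the chain-transitivity of $\Lambda$: by internal chain-transitivity (Proposition 2.1 of \cite{FS}) together with local finiteness of singularities in a sectional-hyperbolic class, one produces a sequence of regular points $x_n \in \Lambda$ with $x_n \to \sigma$. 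Combining this with the fact that $X(x) \in F_x$ for regular $x \in \Lambda$ (any $E$-component would decay exponentially while $|X(x)|$ stays bounded), continuity of $F$, and the projectivized action of $A$ on $F_\sigma$, one can locate a unit vector in the kernel of $A$ that is a limit of flow directions of regular points of $\Lambda$, and derive a contradiction with sectional-expansion applied to well-chosen $2$-planes of $F_\sigma$ containing this vector.

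Granting hyperbolicity, I would then turn to the dimension identity. Sectional-expansion rules out any conjugate pair of eigenvalues of $A|_{F_\sigma}$ with negative real part, since the associated real $2$-plane would contract in area. Hence $\dim(F_\sigma \cap E^s_\sigma) \in \{0,1\}$, so $\dim E^s_\sigma \in \{\dim E_\sigma, \dim E_\sigma + 1\}$. To exclude the first alternative, suppose $E^s_\sigma = E_\sigma$, so that by the stable manifold theorem $W^s_{\mathrm{loc}}(\sigma)$ is tangent to $E_\sigma$. The accumulation argument above provides a regular orbit of $\Lambda$ entering arbitrarily small neighborhoods of $\sigma$; such an orbit must asymptotically approach $\sigma$ along its stable manifold, so $X(x)$ is asymptotically tangent to $W^s_{\mathrm{loc}}(\sigma)$ and hence close to $E^s_\sigma = E_\sigma$. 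But $X(x) \in F_x$ with $F_x \to F_\sigma$, and $E_\sigma \cap F_\sigma = \{0\}$, which forces $X(x) \to 0$ along the orbit and contradicts regularity.

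The main obstacle is the zero-eigenvalue subcase of Part 1, where sectional-expansion has no purely linear bite and the chain-recurrence hypothesis must be used in an essential way. Pinning down the interaction between the projectivized tangent dynamics along orbits of $\Lambda$ approaching $\sigma$ and the center subspace of $A$ is where the real technical work lies.
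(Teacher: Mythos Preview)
Your outline tracks the paper's proof: the purely imaginary case $\alpha=i\omega$ with $\omega\neq 0$ and the bound $\dim(E^s_\sigma\cap F_\sigma)\le 1$ are handled identically, via sectional expansion on the associated invariant $2$-plane in $F_\sigma$. The real differences lie in the zero-eigenvalue subcase and in how you exclude $E^s_\sigma=E_\sigma$.

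Your zero-eigenvalue argument has a genuine gap. You correctly observe that a simple zero eigenvalue in $F_\sigma$ is compatible with sectional expansion at the linear level (eigenvalues $0$ and $\mu>0$ give $\det\Phi_t=e^{\mu t}$ on the plane they span). But then your proposed fix runs headlong into this very obstruction: you cannot ``derive a contradiction with sectional-expansion applied to well-chosen $2$-planes of $F_\sigma$ containing [the kernel] vector'', and the claim that limits of normalized flow directions $X(x_n)/|X(x_n)|$ land in $\ker A$ is not supported by the projectivized dynamics (if the remaining eigenvalues on $F_\sigma$ have positive real part, the kernel direction is projectively \emph{repelling} in forward time). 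The paper, for what it is worth, only writes out the case where the zero-real-part eigenvalue comes with a $2$-dimensional invariant plane on which $\|\Phi_t\|$ stays bounded, so it too is silent on a simple real zero eigenvalue; your sketch does not advance beyond this. For the exclusion of $E^s_\sigma=E_\sigma$, the paper's route is cleaner than yours: it uses the strong stable lamination $\mathcal{F}_{ss}$ tangent to $E$, observes that any $x\in\Lambda\cap(\mathcal{F}_{ss}(\sigma)\setminus\{\sigma\})$ would force $X(x)\in T_x\mathcal{F}_{ss}(\sigma)\cap F_x=E_x\cap F_x=\{0\}$, hence $\Lambda$ avoids $\mathcal{F}_{ss}(\sigma)\setminus\{\sigma\}$; then $E^s_\sigma=E_\sigma$ gives $W^s(\sigma)=\mathcal{F}_{ss}(\sigma)$, contradicting the standard fact that a non-trivial chain-transitive set containing a hyperbolic $\sigma$ meets $W^s(\sigma)\setminus\{\sigma\}$. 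Your version conflates ``regular points of $\Lambda$ accumulating on $\sigma$'' with ``an orbit of $\Lambda$ lying in $W^s(\sigma)$''; a passing orbit near a saddle need not have $X(x)$ close to $E^s_\sigma$ at the moment you sample it, so the contradiction does not follow as stated.
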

\begin{proof}
 Assume that there is a complex eigenvalue $\lambda$ of $DX(\sigma)$ with zero real part, and let $L_{\lambda}=\text{span}\lbrace v_1,v_2\rbrace$ be the subspace spanned by the eigenvectors associated to $\lambda$. Then, $L_{\lambda}$ is invariant for $\Phi_t(\sigma)$ and there are $a,b>0$ such that $a<\Vert \Phi_t(\sigma)v_i\Vert<b$, $i=1,2$, for every $t>0$, so that $L_{\lambda}\cap E_\sigma=\lbrace 0\rbrace$. Therefore, since $E_\sigma$ is dominated by $F_\sigma$, we conclude that $L_{\lambda}\subset F_\sigma$, which contradicts the sectional expanding property of $F$. This shows that real part of any eigenvalue of $DX(\sigma)$ must be non-zero.

Now let consider the hyperbolic splitting $T_{\sigma}M=E^s_\sigma\oplus E^u_\sigma$ of $\sigma$. Note that each one of this subspaces is non trivial, otherwise we have that $\sigma$ is either a source or a sink, so that $\Lambda$ would be trivial which is impossible. Besides, since vectors in $E(\sigma)$ contract uniformly, we see by the domination property  that $E\subseteq E^s_\sigma$ and $E^u_\sigma\subseteq F$.

We claim that $E\neq E^s_\sigma$. Indeed, since $\Lambda$ is invariant and the splitting of $T_{\Lambda}M$ is dominated, it follows that the flow direction $X(x)$ belongs to $F_x$ for any regular point $x\in\Lambda$. Indeed, without loss of generality, we can assume that the Riemannian metric left the subbundles $E$ and $F$ orthogonal. In this way, for $x \in \Lambda$ we write $X(x)=v_{E}+v_{F}$, with $v_{E} \in E_x$ and $v_{F} \in F_x$. Assume that $X(x) \notin F_x$ for some $x\in \Lambda$, then $v_E \neq 0$. Since $E$ and $E^c$ are $\Phi_t$-invariant and orthogonal we have that for every $t \in \R$ that
\begin{eqnarray*} \label{X(x)notinEC}
K&\geq&\Vert X(X_t(x)) \Vert^2\\
&=&\Vert \Phi_t(x) X(x)\Vert^2\\
&=&\Vert \Phi_t(x) v_{E}\Vert^2 + \Vert \Phi_t v_{F}\Vert^2\\
&\geq& \Vert \Phi_t(x) v_{E}\Vert^2\geq \frac{1}{K}e^{\lambda t}\Vert v_e \Vert \rightarrow \infty,\quad t \rightarrow -\infty. 
\end{eqnarray*}
which is a contradiction. 

On the other hand, since $\Lambda$ is partially-hyperbolic it is well known that any point $x\in\Lambda$ has associated its strong stable manifold $\mathcal{F}_{ss}(x)$ which is invariant and it is tangent to $E$ at $x$ (see \cite{HPPS} for instance). Thus, if $x\in\Lambda\cap\left( \mathcal{F}_{ss}(\sigma)\setminus\lbrace\sigma\rbrace\right)$, then $X(x)\in T_x\mathcal{F}_{ss}(\sigma)=E_x^s$, which is absurd. Therefore, if $E=E^s_\sigma$ we have that $W^s(\sigma)=\mathcal{F}_{ss}(\sigma)$, so that 
\begin{displaymath}
\Lambda\cap (W^s(\sigma)\setminus\lbrace\sigma\rbrace)=\Lambda\cap (\mathcal{F}_{ss}(\sigma)\setminus\lbrace\sigma\rbrace)=\emptyset,
\end{displaymath}
which is a contradiction because $\Lambda$ is a non trivial chain-recurrent class. So, by domination property we see that $E^s_\sigma\cap F_{\sigma}\neq \lbrace0\rbrace$. Hence, since $F$ is area expanding, it follows that $dim\,(E^s_\sigma\cap F_{\sigma})=1$, which proves the result.  
\end{proof}

\begin{remark}\label{rmk: LL}
A singularity $\sigma$ satisfying the conditions of above lemma is called \textit{Lorenz-like singularity}. Moreover, the conditions $dim E^s_\sigma\geq 2$ and $dim\,(E^s_\sigma\cap F_{\sigma})=1$ provide an alternative definition of Lorenz-like singularity: A singularity $\sigma$ for $X$ is Lorenz-like if there are at least two eigenvalues with negative real part, one of them is real, say $\lambda_-,$ and the real part of remaining eigenvalues are outside the interval $[\lambda_-,-\lambda_-]$.
\end{remark}


We end this subsection by showing that sectional-hyperbolicity of a compact invariant set can be extended to the maximal invariant set of a neighborhood of it.  

\begin{lemma}\label{SHneighborhood}
Let $\Lambda$ be a sectional-hyperbolic set for a $C^1$ vector field $X$. Then, there is a neighborhood $U$ of $\Lambda$ such that the maximal invariant set $\widetilde{\Lambda}$ in $U$ is also sectional-hyperbolic for $X$.  
\end{lemma}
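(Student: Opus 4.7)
The plan is to adapt the standard neighborhood-extension argument for (partially) hyperbolic sets to the sectional-hyperbolic setting: first upgrade the asymptotic conditions in Definition \ref{def: sec-hyp} to uniform time-$T$ estimates on the compact set $\Lambda$, then propagate those estimates to a small tubular neighborhood of $\Lambda$ using continuity and a cone-field argument, and finally let the maximal invariant set inherit an invariant splitting from the invariant cone fields, with the relevant uniform estimates along its orbits.

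First, using compactness of $\Lambda$ and of the bundle of $2$-planes over $\Lambda$, together with the inequalities in the definitions of dominated splitting and of sectional expansion, I would fix a single $T>0$ such that for every $x\in\Lambda$
\begin{equation*}
\frac{\Vert\Phi_T(x)\vert_{E_x}\Vert}{m(\Phi_T(x)\vert_{F_x})} < \tfrac{1}{2}, \qquad \Vert\Phi_T(x)\vert_{E_x}\Vert < \tfrac{1}{2}, \qquad |\det \Phi_T(x)\vert_{L_x}| > 2
\end{equation*}
for every two-dimensional subspace $L_x\subset F_x$. Then I would extend the continuous subbundles $E$ and $F$ from $\Lambda$ to continuous (not necessarily invariant) subbundles $\widetilde E, \widetilde F$ on a small open neighborhood $U_0\supset\Lambda$ with $TU_0 M = \widetilde E\oplus \widetilde F$, and introduce small-angle cone fields $C^{\widetilde F}_\alpha$ and $C^{\widetilde E}_\alpha$ around $\widetilde F$ and $\widetilde E$ respectively. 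By uniform continuity of $\Phi_T$ and the strict inequalities above, I can shrink $\alpha>0$ and $U_0$ to a neighborhood $U\subset U_0$ of $\Lambda$ on which, whenever both $x$ and $X_T(x)$ belong to $U$,
\begin{equation*}
\Phi_T(x)\bigl(\overline{C^{\widetilde F}_\alpha(x)}\setminus\{0\}\bigr)\subset \mathrm{int}\,C^{\widetilde F}_\alpha(X_T(x)),
\end{equation*}
the symmetric backward invariance holds for $C^{\widetilde E}_\alpha$, and the quantitative bounds $\Vert\Phi_T(x)v\Vert\le \tfrac{2}{3}\Vert v\Vert$ for $v\in C^{\widetilde E}_\alpha(x)$ and $|\det\Phi_T(x)\vert_L|\ge \tfrac{3}{2}$ for every $2$-plane $L\subset C^{\widetilde F}_\alpha(x)$ persist.

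Third, the maximal invariant set $\widetilde{\Lambda} = \bigcap_{t\in\mathbb{R}} X_t(U)$ then satisfies the hypotheses of the standard cone-field criterion for dominated splittings, which yields a continuous $\Phi_t$-invariant splitting $T_{\widetilde\Lambda}M = \widehat E\oplus\widehat F$ agreeing with $E\oplus F$ on $\Lambda$ and with $\widehat E_x\subset C^{\widetilde E}_\alpha(x)$, $\widehat F_x\subset C^{\widetilde F}_\alpha(x)$. The uniform time-$T$ estimates above apply along every orbit contained in $U$, and a standard submultiplicative/chain-rule argument upgrades them to exponential bounds: there exist $K,\lambda>0$ with $\Vert \Phi_t(x)\vert_{\widehat E_x}\Vert\le K e^{-\lambda t}$ and $|\det\Phi_t(x)\vert_{L_x}|\ge Ke^{\lambda t}$ for all $t>0$, $x\in\widetilde\Lambda$ and every $2$-plane $L_x\subset \widehat F_x$, which are precisely the partial-hyperbolicity and sectional-expansion conditions.

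The main obstacle is verifying the sectional-expansion estimate for \emph{every} $2$-plane in $\widehat F$, not just on some distinguished subspace. I would handle this by working on the Grassmannian bundle of $2$-planes: over $\Lambda$ the continuous function $(x,L)\mapsto |\det\Phi_T(x)\vert_L|$ is bounded below by $2$ on the compact set of $2$-planes contained in $F$, so by uniform continuity of $\Phi_T$ and of the Grassmannian it remains bounded below by $3/2$ on a Grassmannian-neighborhood of that set; shrinking $\alpha$ and $U$ if necessary guarantees that every $2$-plane inside $C^{\widetilde F}_\alpha(y)$, $y\in U$, falls in this neighborhood. Once this Grassmannian-continuity issue is handled, the remaining pieces follow well-trodden templates from the dominated/partially hyperbolic literature.
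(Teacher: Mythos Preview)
Your proposal is correct and follows essentially the same strategy as the paper: extend the splitting continuously to a neighborhood, set up invariant cone fields around $\widetilde E$ and $\widetilde F$, recover an invariant dominated splitting on the maximal invariant set from the cones, and then verify sectional expansion via a continuity argument on the Grassmannian of $2$-planes. The paper's proof is simply more explicit in places where you invoke standard machinery---it constructs the invariant bundles by hand as nested intersections of cone images and proves directly that the limit is a subspace of the correct dimension---but the underlying ideas, including the key Grassmannian compactness step for sectional expansion, are the same.
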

\begin{proof}
The proof of this result is based in the proof of  Propositions 3.1 and 3.2 in \cite{AM} and Proposition 2.10 in \cite{AM2}. First, there is a neighborhood $U_0$ of $\Lambda$ and a continuous (not necessarily invariant) extension $T_{U_0}M=\widetilde{E}\oplus\widetilde{F}$ of the splitting $T_{\Lambda}M=E\oplus F$. Moreover, if $\Lambda'$ is the maximal invariant set for the flow of $X$ contained in $U_0$, there are constants $a,T>0$ such that for every $x\in\Lambda'$ the cone fields 
\begin{displaymath}
\C_x^s(a)=\lbrace v=v^{\widetilde{E}}+v^{\widetilde{F}}\in \widetilde{E}\oplus \widetilde{F} : \Vert v^{\widetilde{F}}\Vert\leq a\Vert v^{\widetilde{E}}\Vert\rbrace
\end{displaymath}
and 
\begin{displaymath}
\C_x^c(a)=\lbrace v=v^{\widetilde{E}}+v^{\widetilde{F}}\in \widetilde{E}\oplus \widetilde{F} : \Vert v^{\widetilde{E}}\Vert\leq a\Vert v^{\widetilde{F}}\Vert\rbrace
\end{displaymath}
satisfy the following conditions: 
\begin{enumerate}
    \item Invariance: 
\begin{displaymath}
\Phi_{-t}(\C_{\phi_t(x)}^s(a))\subset \C_x^s(a)\text{ and } \Phi_{t}(\C_{x}^c(a))\subset \C_{\phi_t(x)}^c(a),\quad \forall t\geq T.
\end{displaymath}
\item Domination: There are constants $c>0$ and $\lambda_0\in(0,1)$ such that 
\begin{displaymath}
    \frac{\Vert \Phi_t(x)u\Vert}{\Vert \Phi_t(x)v\Vert}\leq c\lambda_0^{t}\frac{\Vert u\Vert}{\Vert v\Vert},\quad \forall t>0,
\end{displaymath}
for every nonzero vectors $v\in \C_{x}^c(a)$ and $u\in \Phi_{-t}(\C_{\phi_t(x)}^s(a))$. 
\end{enumerate}

Now, for $x\in\widetilde{\Lambda}$ define the bundles
\begin{displaymath}
E^s=\bigcap_{t\geq 0}\Phi_{-t}(\C_{X_t(x)}^s(a))\text{ and }E^c=\bigcap_{t\geq 0}\Phi_{t}(\C_{X_{-t}(x)}^c(a)).
\end{displaymath}
By Proposition 3.2 in \cite{AM} one has that $E^s$ is a $\Phi_t$-invariant and uniformly contracting continuous bundle satisfying $E^s_x\subset \C_x^s(a)$ for every $x\in\Lambda'$ and  $E^s_x=E_x$ for every $x\in\Lambda$. Besides, since $\Phi_{t}(\C_{X_{-t}(x)}^c(a))\subset \C_x^c(a)$ for every $t\geq T$ we have that $E^c_x\subset \C_x^c(a)$. So, by construction of the bundles and domination property, we have that $T_{\Lambda}M=E^s\oplus E^c$ is a dominated splitting for $\Lambda'$.   

\textbf{Claim: }$E^c$ is a $\Phi_t$-invariant sectional expanding continuous bundle such that $E^c_x=F_x$ for every $x\in\Lambda$.

On one hand, let's see that $E_x^c$ is an element of the space $\mathcal{G}_x$ of $d_F$-dimensional subspaces of $T_xM$. First, since $\Lambda'$ is compact, the space $\mathcal{G}_x$ is compact, so that the sequence $\lbrace\Phi_t(\widetilde{F}_{X_{-t}(x)})\rbrace_{t>0}\subset \mathcal{G}_x$ has a subsequence $\Phi_{t_n}(\widetilde{F}_{X_{-t_n}(x)})$, $t_n\geq T$, whose limit $G_x$ belongs to $\mathcal{G}_x$. Second, since $\Phi_{t}(\C_{X_{-t}(x)}^c(a))$ is a nested family of subspaces of $T_xM$, by definition of $E^c$ we have that $E^c=\lim_{t\to\infty}\Phi_{t}(\C_{X_{-t}(x)}^c(a))$. 
Then, as $\widetilde{F}_x\subset \C_x^c(a)$, it follows that $G_x\subset E^c_x$. In fact, $E^c_x=G_x$ for every $x\in\Lambda'$. Indeed, assume that $E_x^c\neq G_x$ for some $x\in\Lambda'$, so that $E_x^c$ is a non-trivial cone. Let $u_n\in \widetilde{F}_{X_{-t_n}(x)}$, $n\geq 1$, such that $\Vert\Phi_{t_n}(X_{-t_n}(x))u_n\Vert=1$. So, there is a nonzero vector $v\in E^s_x$ such that $w_n=\Phi_{t_n}(X_{-t_n}(x))u_n+v\in E_x^c$ for every $n\geq 1$, so that $\Phi_{-t_n}(x)w_n=u_n+\Phi_{-t_n}(x)v\in \C_{X_{-t_n}(x)}^c(a)$ by definition of $E_x^c$. Hence,   
\begin{displaymath}
\Vert \Phi_{-t_n}(x)^{\widetilde{E}}v\Vert\leq a(1-a^2)^{-1}\Vert u_n\Vert.
\end{displaymath}
Then, by domination and the definition of $u_n$, 
\begin{eqnarray*}
c^{-1}(\lambda_0)^{-t_n}\Vert u_n\Vert\Vert v\Vert\leq\Vert \Phi_{-t_n}(x)v\Vert&\leq &(1+a)\Vert \Phi_{-t_n}(x)^{\widetilde{E}}v\Vert \\
&\leq &a(1+a)(1-a^2)^{-1}\Vert u_n\Vert,
\end{eqnarray*}
which implies that 
\begin{displaymath}
\Vert v\Vert\leq c(\lambda_0)^{t_n}a(1+a)(1-a^2)^{-1}\to 0,\quad n\to\infty.
\end{displaymath}
This contradicts the choice of $v$.  

On the other hand, since $F$ is invariant on $\Lambda$ it follows from definition of $E^c$ that $E_x^c=F_x$ for every $x\in\Lambda$. By continuity of $\Phi_t$ and the definition of $E^c$ we have that $E^c$ is continuous and invariant. 

Finally, we show the sectional expanding property of $E^c$. Let $T_0>0$ such that $\lambda'=\ln((1/2)Ke^{T_0})>0$, where $K$ is given by sectional-hyperbolicity of the bundle $F$. By continuity of $E^c$ and $\Phi_t$ for $x\in\Lambda$ there is a neighborhood $V_x\subset G(2,F)$, where $G(2,F)$ is the Grasmannian bundle of two-planes contained in $E^c$, of $(x,L_x)$ such that for any $(y,L_y)\in V_x$,
\begin{displaymath}
\vert\det\Phi_t(x)\vert_{L_y}\vert\geq \frac{1}{2}\vert\det\Phi_t(x)\vert_{L_x}\vert\geq Ke^{\lambda t},\quad\forall t\in[0,T_0],
\end{displaymath}
where $K=C/2>0$. So, by compactness of $\Lambda$ and $G(2,F)$, and by shrinking $U_0$ if it is necessary, we have  that if $y\in\Lambda'$ and $L_y\subset E_y^c$, there are $x\in \Lambda$ and a two-dimensional subspace $L_x\subset F_x$ such that      
\begin{equation}\label{eq1}
\vert\det\Phi_t(y)\vert_{L_y}\vert\geq Ke^{\lambda t},\quad \forall t\in[0,T_0]. 
\end{equation}
So, if $t>0$, we write $t=mT_0+r$, where $0\leq r<T_0$, and, by applying \eqref{eq1} repeatedly, we obtain that 
\begin{displaymath}
\vert\det\Phi_t(y)\vert_{L_y}\vert\geq K^{1+\frac{t}{T_0}}e^{\lambda t}= Ke^{\lambda't}.
\end{displaymath}
Since $L_y$ was chosen arbitrarily, we get the desired result.
\end{proof}

\section{Robustly Periodic chain-recurrence classes}\label{section:robustper}

In this section, we establish one of the main results of the paper. More precisely, we prove that for $C^1$-generic vector fields, every non-trivial sectional-hyperbolic chain recurrent class which is not reduced to either a homoclinic loop or a saddle connection between singularities is robustly periodic. This result is stated as Theorem 3.1 below and constitutes a key ingredient in the proof of Theorem A, as it allows us to rule out aperiodic behavior for this chain classes in the generic setting.

\begin{theorem}\label{Teohtop}
There exists a $C^1$-generic set $\mathcal{R}\subset \mathcal{X}^1(M)$ such that if $\Lambda$ is a non-trivial sectional-hyperbolic chain-recurrent class which is not reduced to either a homoclinic loop or a saddle connection between singularities, then it is robustly periodic, that is, there is an open neighborhood $\mathcal{U}$ of $X$ such that for every $Y\in \mathcal{U}$, one has $Per(Y|_{\Lambda_Y})\neq\emptyset$, where $\Lambda_Y$ denotes the continuation of the chain recurrent class $\Lambda$ for $X$. In particular, $\Lambda$ is a homoclinic class.  
\end{theorem}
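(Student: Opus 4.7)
The plan is to first assemble the generic set $\mathcal{R}$ as an intersection of standard $C^1$-residual subsets of $\mathcal{X}^1(M)$: the Kupka-Smale residual, the residual of \cite{BC} in which every chain-recurrence class carrying a periodic orbit coincides with the homoclinic class of that orbit, the continuity points of the map $Y\mapsto C(\gamma_Y)$ in the Hausdorff topology, and the residual on which the generic consequences of Hayashi's connecting lemma apply. Fixing $X\in\mathcal{R}$ and a non-trivial sectional-hyperbolic chain-recurrence class $\Lambda$, the argument then splits according to whether $\Lambda\cap Sing(X)=\emptyset$.

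When $\Lambda$ contains no singularity, the Hyperbolic Lemma (Lemma \ref{hyplemma}) directly yields that $\Lambda$ is a saddle-type hyperbolic invariant set. Combined with Lemma \ref{SHneighborhood}, standard arguments produce an isolating neighborhood on which $\Lambda$ is a hyperbolic basic piece, hence contains a dense set of periodic orbits. Since hyperbolicity and isolation are $C^1$-open, the continuation $\Lambda_Y$ remains a non-empty hyperbolic basic set for all $Y$ sufficiently $C^1$-close to $X$, so $Per(Y|_{\Lambda_Y})\neq\emptyset$.

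The substantive case is when $\Lambda$ contains a singularity $\sigma$, which by Lemma \ref{lemma: hypsing} is Lorenz-like. The plan runs as follows. First, use chain-transitivity of $\Lambda$ to produce, for arbitrarily small $r,\alpha>0$, a regular orbit inside $\Lambda$ with a forward iterate in $B_r(\sigma)\cap\overline{D_\alpha^{cs}(\sigma)}$; this exploits the fact that $\sigma$ is accumulated from $\Lambda$ along directions of $F_\sigma$ rather than of $E_\sigma$. Second, transfer the sectional-expansion of $F$ to expansion of the bundle $\mathcal{F}$ for the rescaled linear Poincaré flow $\psi^*_t$ via the quotient construction recalled in Section~2, and run a Pliss-type selection to extract a point $z\in\Lambda$ in this cone that is a $(\lambda,T_0)$-expanding time for $\mathcal{F}$. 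Third, Lemma \ref{3.2} provides the transverse intersection $W^u_{loc}(z)\pitchfork W^{cs}_{loc}(\sigma)$, with $W^u_{loc}(z)$ coming from Theorem \ref{Liao}; coupling this with a backward-accumulation argument inside $\Lambda$ and a generic application of the Hayashi--Bonatti--Crovisier connecting lemma yields a hyperbolic periodic orbit $p$ with a transverse homoclinic relation to $\sigma$, and the residual from \cite{BC} places $p$ inside $C(\sigma)=\Lambda$.

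The chief obstacle is twofold: first, guaranteeing that the extracted expanding-time point $z$ actually belongs to $\Lambda$ rather than merely to a nearby filtrating neighborhood, which requires coupling the Pliss argument with chain-transitivity inside $\Lambda$; and second, upgrading the perturbation-produced periodic orbit to an authentic periodic orbit of $X$ lying in $\Lambda$, which is where the residual assumptions built into $\mathcal{R}$ intervene. Once a hyperbolic periodic orbit $p\in\Lambda$ together with its transverse intersections $W^u(p)\pitchfork W^s(\sigma)$ and $W^s(p)\pitchfork W^u(\sigma)$ is secured, robustness is then immediate: for every $Y$ sufficiently $C^1$-close to $X$ the continuations $p_Y$ and $\sigma_Y$ inherit these transverse intersections, so $p_Y$ is chain-related to $\sigma_Y$ and lies in $\Lambda_Y=C(\sigma_Y)$, yielding $Per(Y|_{\Lambda_Y})\neq\emptyset$ and concluding the proof.
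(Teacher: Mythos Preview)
Your robustness step in the singular case has a genuine gap. You assert that, once a periodic orbit $p\in\Lambda$ with transverse intersections $W^u(p)\pitchfork W^s(\sigma)$ \emph{and} $W^s(p)\pitchfork W^u(\sigma)$ is secured, the persistence of these intersections gives $p_Y\in C(\sigma_Y)$ for nearby $Y$. The first intersection is indeed what Lemma~\ref{3.2} delivers (and is what the paper's Lemma~\ref{connection1} establishes). The second intersection, however, is not obtainable by the tools you invoke. For a Lorenz-like singularity one has $\dim W^u(\sigma)=\dim F-1$, while $\dim W^s(p)=\dim E+1$, so the dimensions sum to exactly $n$: a transverse intersection would be $0$-dimensional, and there is no a priori reason for it to be nonempty. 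Neither the connecting lemma nor the generic properties in $\mathcal{R}$ manufacture such a transverse intersection; the connecting lemma produces heteroclinic connections after perturbation, not robust transverse intersections of this critical codimension. Without $W^s(p)\pitchfork W^u(\sigma)$, you have no argument that $\sigma_Y\sim p_Y$, and hence no way to place $p_Y$ inside $C(\sigma_Y)$.

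The paper circumvents this precisely because the intersection you want does not exist in general. Its Lemma~\ref{connection2} proves $\sigma_Y\sim\gamma_Y$ by a direct box-expansion argument: starting from a point $x\in C(\sigma_Y)\setminus W^s(\operatorname{Sing}(Y))$ (whose existence needs the separate Lemma~\ref{acummulationpoint}), one iterates a small $cu$-box forward using the sectional expansion of $F$ to force its central volume past the scale $d_Y^s$ at which it must cross $W^s(\gamma_Y)$, and then builds an explicit $(\varepsilon,T)$-chain from $\sigma_Y$ to $\gamma_Y$. This is the substantive content missing from your outline. A secondary issue: in your nonsingular case you claim $\Lambda$ admits an isolating neighborhood, but the paper explicitly notes this is not known and instead uses item~(2) of Lemma~\ref{lemm: genericsets} (Hausdorff approximation by periodic orbits) together with uniform stable/unstable manifolds on the enlarged set $\Lambda'$ to place a periodic orbit inside $\Lambda$.
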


In order to prove Theorem \ref{Teohtop}, we shall show first that $C^1$-generically every sectional-hyperbolic non-trivial chain-recurrent class which is not reduced to either a homoclinic loop or a saddle connection between singularities actually contains a periodic orbit, and therefore it is a homoclinic class.  As a direct consequence, it has positive topological entropy. 

Recall that Hausdorff distance between two compact subsets $A,B$ of a metric space $(M,d)$ is defined as 
\begin{displaymath}
d_H(A,B)=\max\left\lbrace\sup_{x\in A}d(x,B),\sup_{y\in B}d(y,A)\right\rbrace,\quad d(a,C)=\inf_{c\in C}d(a,c). 
\end{displaymath}
Denote by $\mathcal{K}(M)$ the set of compact sets of $M$. In this case, $(\mathcal{K}(M),d_H)$ is a metric space. 

Now, recall that for a hyperbolic periodic point $p$ associated to a $C^1$ vector field $X$, with splitting $T_{\mathcal{O}(p)}M=E^s\oplus \langle X\rangle\oplus E^u$, the strong stable and  strong unstable manifolds of $p$ are defined, respectively, by 
$$W^{ss}(p)=\{y\in M; \lim\limits_{t\to \infty}d(X_t(p),X_t(y))= 0\}, $$
and
$$W^{uu}(p)=\{y\in M; \lim\limits_{t\to-\infty}d(X_t(p),X_t(y))= 0\}.$$
 Then, the stable and unstable manifolds of $p$, respectively, are given by $$W^s(p)=\bigcup_{t\in \mathbb{R}}W^{ss}(X_t(p))  \textrm{ and }  W^u(p)=\bigcup_{t\in \mathbb{R}}W^{uu}(X_t(p)).$$
These manifolds are tangent to $E^s$, $E^u$, $E^s\oplus \langle X\rangle$, and $E^{u}\oplus \langle X \rangle$, respectively, at each one of the points of $\mathcal{O}(p)$. Thus, the \textit{homoclinic class} of $p$, denoted by $H(p)$, is defined as:
\begin{displaymath}
H(p):=\overline{W^s(p)\pitchfork W^u(p)}.
\end{displaymath}
We say that the homoclinic class $H(p)$ is \textit{non trivial} if it is not reduced to $\mathcal{O}(p)$.

Next, consider the following generic set that is obtained by combining some items from \cite[Proposition 2.1 ]{CY} and \cite[Lemma 3.12]{GYZ}.

\begin{lemma}\label{lemm: genericsets}
There is a residual subset $\mathcal{R}\subset \mathcal{X}^1(M)$ such that for every $X\in  \mathcal{R} $ it holds: 
\begin{enumerate}
    
    \item Any non-trivial chain-recurrent class $C$ of $X$ containing a hyperbolic periodic orbit $\gamma$ is a homoclinic class. 
    \item For every non trivial compact chain-transitive set $C$ of $X$, there is a sequence of periodic orbits $\gamma_n$ of $X$ such that $\gamma_n$ converges to $C$ in the Hausdorff topology.
    \item There is a $C^1$-open neighborhood $\mathcal{U}_X$, if $C(\sigma)$ is a non-trivial chain-recurrent class of $X$ such, then the map $Y\in \mathcal{U}_X\rightarrow C(\sigma_Y)$ is continuous. 
\end{enumerate}
\end{lemma}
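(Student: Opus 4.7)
The plan is two-pronged: first, establish that at a generic $X$ any non-trivial sectional-hyperbolic chain-recurrent class $\Lambda$ must actually contain a hyperbolic periodic orbit $\gamma$; second, deduce robust periodicity from the existence of such $\gamma$ via continuation. Let $\mathcal{R}$ be the intersection of the residual set of Lemma~\ref{lemm: genericsets} with the Kupka-Smale residual set. Once a hyperbolic periodic orbit $\gamma \subset \Lambda$ has been produced, the second step is immediate: for every $Y$ in a suitable $C^1$-neighborhood $\mathcal{U}$ of $X$ the continuation $\gamma_Y$ exists and is hyperbolic, and by definition $\Lambda_Y = C(\gamma_Y) \supset \gamma_Y$, so $Per(Y|_{\Lambda_Y}) \neq \emptyset$. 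Thus the real work lies in producing $\gamma$.

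To produce $\gamma$, I would split into two cases. If $\Lambda \cap Sing(X) = \emptyset$, then the Hyperbolic Lemma~\ref{hyplemma} makes $\Lambda$ uniformly hyperbolic of saddle type; being chain-transitive and non-trivial, the Anosov closing lemma (or a standard shadowing argument) immediately supplies a hyperbolic periodic orbit inside $\Lambda$. Otherwise $\Lambda$ contains a singularity $\sigma$, which is Lorenz-like by Lemma~\ref{lemma: hypsing}. Lemma~\ref{lemm: genericsets}(2) supplies a sequence of periodic orbits $\gamma_n$ of $X$ with $\gamma_n \to \Lambda$ in the Hausdorff metric. By Lemma~\ref{SHneighborhood}, for $n$ large each $\gamma_n$ sits in the sectional-hyperbolic maximal invariant set of a small neighborhood of $\Lambda$; since $\gamma_n$ has no singularities, Lemma~\ref{hyplemma} makes it a hyperbolic periodic orbit of saddle type.

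The crux, and the main obstacle, is to upgrade Hausdorff proximity to genuine membership $\gamma_n \subset \Lambda$. Because $\Lambda$ is chain-transitive and contains $\sigma$, the argument in the proof of Lemma~\ref{lemma: hypsing} produces a point $p \in (\Lambda \cap W^s(\sigma)) \setminus \{\sigma\}$. Since $W^s(\sigma)$ is tangent to $E^{cs} = E^s_\sigma$ at $\sigma$, any such $p$ sufficiently close to $\sigma$ sits deep inside the cone $D^{cs}_\alpha(\sigma)$ for small $\alpha$; Hausdorff convergence then lets one pick $y_n \in \gamma_n$ close to $p$ and still inside $D^{cs}_\alpha(\sigma)$. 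As a hyperbolic periodic orbit inside a sectional-hyperbolic set, $\gamma_n$ is uniformly $(\lambda, T_0)$-expanding for the normal bundle $\mathcal{F}$ under the scaled linear Poincar\'e flow, so Lemma~\ref{3.2} applies at $y_n$ and yields $W^u_{\mathrm{loc}}(y_n) \pitchfork W^{cs}_{\mathrm{loc}}(\sigma) \neq \emptyset$. This transverse heteroclinic point flows forward into $W^s(\sigma)$ and hence furnishes $(\varepsilon, T)$-chains from $\gamma_n$ to $\sigma$, while Hausdorff closeness trivially provides $(\varepsilon, T)$-chains from $\sigma$ back to $\gamma_n$. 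Therefore $\gamma_n$ and $\sigma$ share a common chain-recurrence class, which must be $C(\sigma) = \Lambda$, so $\gamma_n \subset \Lambda$. The main technical difficulty is precisely this heteroclinic-intersection step: one needs the Lorenz-like cone geometry of $\sigma$ and the Liao-type sectional-hyperbolic expansion packaged in Lemma~\ref{3.2} to convert Hausdorff closeness into a genuine dynamical connection, which is what forces the approximating periodic orbits into $\Lambda$ itself rather than into a nearby class.
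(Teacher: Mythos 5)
Your proposal does not prove the statement in question. Lemma~\ref{lemm: genericsets} is a catalogue of three \emph{general} $C^1$-generic properties of vector fields --- chain classes with hyperbolic periodic orbits are homoclinic classes; non-trivial chain-transitive sets are Hausdorff limits of periodic orbits; chain classes of continuations vary continuously --- and none of these properties mentions sectional-hyperbolicity at all. The paper does not give a new argument for this lemma: it is obtained by intersecting the residual sets already established in \cite[Proposition~2.1]{CY} and \cite[Lemma~3.12]{GYZ}, which in turn trace back to the Bonatti--Crovisier generic dynamics machinery (item~(1) is the flow version of \cite{BC}, item~(2) is Crovisier's approximation of chain-transitive sets by periodic orbits, and item~(3) is generic upper-semicontinuity/continuity of chain classes). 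The ``proof'' is purely a matter of collecting citations and taking a countable intersection of residual sets.

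What you have written instead is a sketch of a \emph{later and much harder} result: namely Lemma~\ref{thm: homoclinicclass} combined with Theorem~\ref{Teohtop} (that a non-trivial sectional-hyperbolic chain-recurrent class of a generic flow contains a periodic orbit, and is robustly periodic). You can see this from your own opening sentence, which explicitly restricts to sectional-hyperbolic classes --- a hypothesis absent from Lemma~\ref{lemm: genericsets}. Worse, your proposal is circular with respect to the statement it is supposed to prove: you write ``Let $\mathcal{R}$ be the intersection of the residual set of Lemma~\ref{lemm: genericsets} with the Kupka--Smale residual set'' and then invoke item~(2) of Lemma~\ref{lemm: genericsets} to obtain the approximating sequence $\gamma_n$. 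You cannot use the conclusion of the lemma as an ingredient in its own proof. As a blind attempt at Theorem~\ref{Teohtop}, the broad strategy (Hausdorff-approximating periodic orbits, the Lorenz-like cone $D^{cs}_\alpha(\sigma)$, Lemma~\ref{3.2}, and Liao's theorem to force $\gamma_n \subset C(\sigma)$) does track the paper's argument in Case~2 of Lemma~\ref{thm: homoclinicclass}; but for the statement actually posed, the correct answer is simply to cite the residual sets from \cite{CY} and \cite{GYZ} and intersect them, with no dynamical argument required.
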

The generic property given in the second item of Lemma \ref{lemm: genericsets} was not under consideration in the proof of Theorem A in \cite{CY}. It will be the key to eliminating the assumption of Lyapunov stability in our proofs. 

\begin{remark}\label{remark3.5}
It should be noted that not every non-trivial chain transitive sectional-hyperbolic is encompassed by the set $\mathcal{R}$, as illustrated by the Venice mask. Indeed, the Venice mask is a non transitive sectional-hyperbolic chain-transitive class having a dense set of periodic orbits, so that it cannot be a homoclinic class. The reason is that since the Venice mask is the union non disjoint of two homoclinic classes $H_0$ and $H_1$ whose intersection is the closure of the unstable manifold of a singularity, any periodic of its periodic orbits belongs either $H_0$ or $H_1$, implying that it is not accumulated by the periodic orbits in the Hausdorff topology. 
\end{remark}

Next, we describe the homoclinic structure of $\Lambda$ when it does not contain singularities. 

\begin{lemma}\label{nsyngularcase}
Let $\mathcal{R}$ be the residual set of Lemma \ref{lemm: genericsets}, and let $\Lambda$ be a sectional-hyperbolic chain recurrent class without singularities. Then, it is a homoclinic class.  
\end{lemma}

\begin{proof}
  In this case, $\Lambda$ is hyperbolic by Lemma \ref{hyplemma}. We could now be tempted to conclude the existence of a periodic orbit contained in $\Lambda$ via the Shadowing lemma. Nevertheless, we cannot follow this approach, because we do not know whether $\Lambda$ is isolated or not, and this may prevent the periodic orbits given by the shadowing lemma from being contained in $\Lambda$. Instead, we shall use Lemma \ref{lemm: genericsets} to derive our conclusion.

    First, let $U$ be the neighborhood given by Lemma \ref{SHneighborhood} and let $$\Lambda'=\Lambda\cup \bigcup_{n\geq 1}\gamma_n.$$
\begin{claim}\label{claim1}
$\Lambda'$  is a compact and invariant subset of $U$. 
\end{claim}
\begin{proof}
    First, observe that $\Lambda'\subset U$ and $\Lambda'$ is invariant by definition. Next, let $\lbrace x_k\rbrace_{k\geq 1}\subset \Lambda'$  be a sequence such that $x_k\to x\in M$, when $k\to+\infty$. If  $x_k\in \Lambda$ for sufficiently large $k$, then $x\in \Lambda$, because $\Lambda$ is compact. Similarly, if $x_k\subset \gamma_n$ for sufficiently large $k$ and some $n\geq 1$ fixed, then $x\in \gamma_n\subset\Lambda'$. It remains to verify the case where there are sequences $k_l,n_l\to \infty$, $n_l\neq n_{l'}$, for $l\neq l'$ and $x_{k_l}\subset \gamma_{n_l}$. In this case, since $d^H(\gamma_n,\Lambda)\to 0,$ we have $d(x_{k_l},\Lambda)\to 0$. Hence, $x\in \Lambda\subset\Lambda'$ and the claim holds. 
\end{proof}

Since $\Lambda'\subset U$, Claim \ref{claim1} and the hyperbolic lemma imply that $\Lambda'$ is a compact invariant  hyperbolic set.  Let $T{\Lambda'}=E'^s\oplus \langle X\rangle \oplus  E'^u$ be the hyperbolic splitting of $\Lambda'$. It is well known that dominated splittings vary continuously and hence $\dim(E'^s)$ and $\dim(E'^u)$ are constant along $\Lambda$, because $\Lambda$ is a chain transitive set. Moreover, up to possibly reducing $U$, we can assume $dim(E'^s)$ and $\dim(E'^u)$ are constant along $\Lambda'$  because $d^H(\gamma_n,\Lambda)\to 0$. Also, the continuity of the splitting and the compactness of $\Lambda'$ implies that  the angles between the fibers $E'^s$, $\langle X \rangle$ and $E'^u$ are uniformly limited away from zero.  

   Since $\Lambda'$ is hyperbolic, $W^s_{loc}(x)$ and $W^u_{loc}(x)$ are immersed manifolds, respectively tangent to $E'^s\oplus \langle X\rangle$  and $\langle X \rangle \oplus E'^u$, for any $x\in \Lambda'$. Moreover, those manifolds have a uniform size. For each $n\geq 1$, let $p_n$ be a point of $\gamma_n$. By possibly taking a subsequence, we can assume that $p_n\to x\in \Lambda$. By the uniform size of the local stable and unstable manifolds, the constant dimension of the invariant subspaces and the angle uniformly bounded away from zero, we get $$W^u(p_n)\pitchfork W^s(x) \neq\emptyset \textrm{ and } W^s(p_n)\pitchfork W^u(x)\neq\emptyset,$$
for $n$ sufficiently large. Consequently, $\Lambda$ contains a periodic orbit. Then, the result follows from item (2) of Lemma \ref{lemm: genericsets}. 
\end{proof}

Now, we recall some previous results obtained in \cite{PYY2}, in order to get the proof of Theorem \ref{Teohtop} in the singular case. For this, we follow the exposition given in the aforementioned reference. By Lemma \ref{lemma: hypsing} any singularity $\sigma\in\Lambda$ is hyperbolic. Let consider the hyperbolic splitting
$E^s_\sigma \oplus E^u_\sigma$ of $\sigma$. Without loss of generality, we can think of $\sigma$ as the origin in $\mathbb{R}^n$, and assume that $E^s_\sigma$ and $E^u_\sigma$ are perpendicular (which is possible if one changes the metric). In particular, we will assume that $E^s_\sigma = \mathbb{R}^s$ is the $s$-dimensional subspace of $\mathbb{R}^n$ with the last $n-s$ coordinates being zero. Here $s = \dim E^s_\sigma$ is the stable index of $\sigma$. Similarly, $E^u_\sigma$ is the subspace of $\mathbb{R}^n$ where the first $s$ coordinates are zero. 


 Since the vector field $X$ is $C^1$, we can take a neighborhood $V_{\sigma}= B_r(\sigma)$ with $r$ small enough, such that the flow in $V_{\sigma}$ can be written as
     \begin{displaymath}
     X_t(x) = e^{At}x + C^1 \text{ small perturbation}, 
     \end{displaymath}
 where $A$ is a matrix with no eigenvalue on the imaginary axis. 

For each $x \in V_{\sigma}$, denote by $x^s$ its distance to $E^s_\sigma$ and $x^u$ its distance to $E^u_\sigma$. Then for every $\alpha > 0$ small, we define the $\alpha$-cone on the manifold, denote by $D^i_\alpha(\sigma)$, $i = s,u$, as follows:
\[
D^s_\alpha(\sigma) = \{x \in V_{\sigma} : x^u < \alpha x^s\}, \qquad
D^u_\alpha(\sigma) = \{x \in V_{\sigma} : x^s < \alpha x^u\}.
\]

Note that the hyperbolic splitting $T_\sigma M = E^s_\sigma \oplus E^u_\sigma$ can be extended to $V_{\sigma}$ in a natural way: for each $x \in V_{\sigma}$, put $E^s(x)$ as the $s$-dimensional hyperplane that is parallel to $E^s_\sigma$; the same can be done for $E^u(x)$. This allows us to consider the $\alpha$-cones $C_\alpha(E^i)$, $i = s,u$, on the tangent bundle of $V_{\sigma}$. 

Let us fix some $\alpha > 0$ small enough. One can think of the region $V_{\sigma} \setminus (D^s_\alpha(\sigma) \cup D^u_\alpha(\sigma))$ as the place where the flow
is ``making the turn'' from the $E^s$-cone to the $E^u$-cone. For each $x \in U$, define 
\[
t^+(x) = \sup\{t > 0 : X_{[0,t]}(x) \subset V_{\sigma}\}, \qquad
t^-(x) = \sup\{t > 0 : X_{[-t,0]}(x) \subset V_{\sigma}\}.
\]
It is easy to check that the orbit segment $X_{(-t^-,t^+)}(x)$ contains $x$ and it is contained in $U$. In this way, we write
 \begin{equation}\label{entradasalida}
 x_{e} = X_{-t_i^-}(x), \qquad x_{o} = X_{t_i^+}(x)
 \end{equation}


\begin{lemma}[Lemma 3.2 of \cite{PYY2}]\label{lemma3.2}
Let $\sigma$ be a hyperbolic singularity for a $C^1$ vector field $X$. Then for every $\alpha > 0$ small enough,
there exists $T_\alpha > 0$ such that for every $r > 0$ small enough and every
$x \in V_{\sigma}=B_r(\sigma)$, the set
\[
T(x) := \{t \in (-t^-,t^+) : X_t(x) \notin D^s_\alpha(\sigma) \cup D^u_\alpha(\sigma)\}
\]
has length smaller than $T_\alpha$.
\end{lemma}

Now, for a non singular point $x\in M$, the \textit{orthogonal complement} $N_x$ of $X(x)$ is defined by  
\begin{displaymath}
N_x=\lbrace v\in T_xM : \langle v,X(x)\rangle=0\rbrace. 
\end{displaymath}
In this way, the \textit{linear Poincar\'e flow} $\psi_t$ on the normal bundle $N_{M\setminus Sing(X)}$ is given by 
\begin{displaymath}
\psi_t(v)=\pi_{N_{X_t(x)}}(\Phi_t(x)v),\quad \forall x\in M\setminus Sing(X), v\in N_x, 
\end{displaymath}
where $\pi_{N_z}(w)$ is the orthogonal projection of $w$ on $N_z$. 
The \textit{scaled linear Poincar\'e flow}, denoted by $\psi^*_t$, is defined as the normalization of $\psi_t$ by the flow speed, i.e., 
\begin{displaymath}
\psi^*_t(v)=\frac{\Vert X(x)\Vert}{\Vert X(X_t(x))\Vert}\psi_t(v).    
\end{displaymath}

In \cite{BM} it is showed that a partially hyperbolic splitting $T_{\Lambda}M=E\oplus F$ of a compact invariant set $\Lambda$ induces a partially hyperbolic splitting  $T_xN_x=\mathcal{E}_x\oplus \mathcal{F}_x$ of the normal subspace $N_x$ for any $x\in\Lambda\setminus Sing(X)$, where  $\mathcal{E}_x=\pi(E_x)$ and $\mathcal{F}_x=\pi_x(F_x)$ for every $x\in\Lambda$, where $\pi$ denotes the orthogonal projection. Indeed, since $F_x$ contains the flow direction, it follows that $\pi(v)\in F_x$ for any $v\neq X(x)$ in $F_x$, hence $\mathcal{F}_x\subset F_x$ because $\pi$ is linear. So, since the splitting $E\oplus F$ is dominated for $\Phi_t$, we deduce the dominance of the splitting $\mathcal{E}\oplus \mathcal{F}$. The uniform contracting property for $\pi(E)$ follows because $\Vert\pi(v)\Vert\leq\Vert v\Vert$ for every $v\in E$. It is important to observe that the above property remains valid for the rescaled linear Poincar\'e flow $\psi^*_t$.  

Recall the notion of $(\eta, T, \mathcal{F})$-backward contracting points. 
\begin{definition}
Let $X\in X^1(M)$, $\Lambda$ a compact invariant set of $X$, and $\mathcal{F}\subset N_{\Lambda\setminus Sing(X)}$ an invariant bundle of the rescaled linear Poincar\'e flow $\psi^*_t$. For $0<\eta<1$ and $T_0>0$, we say that an orbit segment $X_{[-T,0]}(x)$, $x\in \Lambda\setminus Sing(X)$, is \textit{$(\eta,T_0)$-backward contracting for the bundle $\mathcal{F}$} if there is a partition $0=t_0<t_1<t_2<\cdots<t_k=T$, where $t_{i+1}-t_i\in[T_0,2T_0]$ for every $i=0,1,\ldots, k-1$ such that 
\begin{displaymath}
\prod_{i=0}^{m-1}\Vert\psi^*_{-(t_{i+1}-t_{i})}(X_{t_{i+1}}(x))\vert_{\mathcal{F}(X_{t_{i+1}}(x))}\Vert\leq \eta^{m}.
\end{displaymath}
On the other hand, we say that $x\in \Lambda\setminus Sing(X)$ is a \textit{$(\eta,T_0)$-backward hyperbolic point for the bundle $\mathcal{F}$} if there is a partition $0=t_0<t_1<t_2<\cdots<t_k<\cdots$, where $t_{i+1}-t_i\in[T_0,2T_0]$ for every $i=0,1,\ldots$ such that 
\begin{displaymath}
\prod_{i=0}^{k-1}\Vert\psi^*_{-(t_{i+1}-t_{i})}(X_{t_{i+1}}(x))\vert_{\mathcal{F}(X_{t_{i+1}}(x))}\Vert\leq \eta^{k},\quad\forall k\geq 1.
\end{displaymath}  
\end{definition}


The next lemma explain the behavior of the orbit segments associated to points in the cones $D_{\alpha}^*(\sigma)$, $*=s,u$, for every $\sigma\in Sing(X|_{\Lambda})$, with respect to the scaled LPF. 

\begin{lemma}[Lemma 3.9 of \cite{PYY2}]\label{bacward}
Let $\sigma$ be a Lorenz-like singularity, then there exists $\eta\in(0,1)$, $T_0>0$, $\alpha_1>0$ and $r>0$ such that if $\alpha<\alpha_1$ and $x$ is a periodic orbit such that the orbit segment $X_{[0,T]}(x)$ is contained in $B_r(\sigma)\cap \overline{D_{\alpha}^s(\sigma)}$, then the orbit segment is
$(\eta, T_0)$-backward contracting. If the orbit segment is in $\overline{D_{\alpha}^u(\sigma)}$, then it does not
have any sub-segment that is backward contracting.
\end{lemma}

Finally, we present the next lemma, which allows us to find a size of the cone field on the $\alpha$-cone on the manifold in order to get transverse intersections between invariant manifolds.

\begin{lemma}[Lemma 3.11 of\cite{PYY2}]\label{intersections}
Let $\sigma$ be a Lorenz-like singularity. Then for $r > 0$ small enough, for every $\eta \in (0,1)$,
$T_0 > 0$ there exists $\alpha_2 > 0$ such that for all $\alpha < \alpha_2$, if
$y \in B_r(\sigma) \cap Cl(D^c_\alpha(\sigma))$ is a $(\eta,T_0)$-backward hyperbolic point on its orbit,
then
\[
W^u(y) \cap W^{cs}(\sigma) \neq \varnothing.
\]
\end{lemma}

\begin{remark}\label{chosealpha}
In our case, since $Sing(X)_{\Lambda}$ is finite, the positive nummber $$\alpha=\min_{\sigma\in Sing(X|_{\Lambda})}\{\alpha_1(\sigma),\alpha_2(\sigma)\})>0$$ satisfies the conclusion of Lemma \ref{bacward} and Lemma \ref{intersections} for every $\sigma\in Sing(X|_{\Lambda})$.      
\end{remark}

We now present the following lemma, which will be used to prove Theorem \ref{Teohtop}.
\begin{lemma}\label{prop2.4CY}
Let $\mathcal{R}$ be the residual set of Lemma \ref{lemm: genericsets}. Let $X\in \mathcal{R}$, $\sigma_X\in Sing(X)$ and $C(\sigma)$ be a non-trivial sectional-hyperbolic chain-recurrent class. There is a neighborhood $\mathcal{U}_0\subset\mathcal{U}_X$ of $X$ such that for every $Y\in \mathcal{U}_0$, the continuation $C(\sigma_Y)$ is sectional-hyperbolic. 
\end{lemma}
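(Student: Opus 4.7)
The plan is to combine Lemma \ref{SHneighborhood}, which upgrades sectional-hyperbolicity on $C(\sigma_X)$ to the maximal invariant set of a neighborhood for the fixed vector field $X$, with the $C^1$-robustness of that extension under small perturbations of the field, and with the Hausdorff-continuity of the chain-recurrence class provided by item (3) of Lemma \ref{lemm: genericsets}.

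First I would apply Lemma \ref{SHneighborhood} to $\Lambda = C(\sigma_X)$, obtaining an open neighborhood $U$ of $C(\sigma_X)$ whose maximal $X$-invariant subset $\widetilde{\Lambda}_X := \bigcap_{t\in\mathbb{R}} X_t(\overline{U})$ admits a dominated splitting $\widetilde{E}\oplus \widetilde{F}$ with $\widetilde{E}$ uniformly contracting and $\widetilde{F}$ sectional-expanding. Inspecting that proof, the construction is governed by strict cone-field estimates: there exist constants $a,T,T_0>0$ witnessing (i) strict $T$-invariance of stable and center cones $\C^s_x(a),\C^c_x(a)$ over $\overline{U}$ under $\Phi^X_t$, (ii) the strict domination inequality from condition (2) there, and (iii) sectional area-expansion on every two-plane inside $\C^c(a)$ after time $T_0$. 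Since all three inequalities are strict and depend continuously on the vector field in the $C^1$ topology, there exists a $C^1$-neighborhood $\mathcal{U}_1$ of $X$ such that, for every $Y\in\mathcal{U}_1$, the same estimates hold for the tangent flow of $Y$ over $\overline{U}$. Repeating the construction of Lemma \ref{SHneighborhood} with $Y$ in place of $X$ then produces a sectional-hyperbolic splitting on the maximal $Y$-invariant set $\widetilde{\Lambda}_Y := \bigcap_{t\in\mathbb{R}} Y_t(\overline{U})$.

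Next I would invoke item (3) of Lemma \ref{lemm: genericsets}: on $\mathcal{U}_X$ the map $Y\mapsto C(\sigma_Y)$ is Hausdorff-continuous. Since $C(\sigma_X)\subset U$ and $U$ is open, after shrinking we obtain a $C^1$-neighborhood $\mathcal{U}_0\subset \mathcal{U}_X\cap \mathcal{U}_1$ of $X$ such that $C(\sigma_Y)\subset U$ for every $Y\in\mathcal{U}_0$. As $C(\sigma_Y)$ is compact and $Y$-invariant, it must lie inside $\widetilde{\Lambda}_Y$, and so restricting the sectional-hyperbolic splitting constructed in the previous step to $C(\sigma_Y)$ yields the required sectional-hyperbolic structure.

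The main obstacle I expect is the clean verification, inside step one, that all three defining features of sectional-hyperbolicity — strict cone invariance, the domination inequality, and area-expansion on two-planes inside the central cone — are preserved simultaneously under small $C^1$-perturbations. The cone invariance and domination pieces are a standard consequence of the strictness of the inequalities and the $C^1$-continuity of the tangent flow over the compact set $\overline{U}$; for the third, the argument reduces to a compactness argument on the Grassmannian bundle $G(2,\widetilde{F})$ over $\overline{U}$ combined with continuity of $\Phi^Y_t$ in $Y$ on the fixed time window $[0,T_0]$, in the same spirit as the final paragraph of the proof of Lemma \ref{SHneighborhood}.
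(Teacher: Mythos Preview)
Your proposal is correct and follows essentially the same route as the paper: combine the $C^1$-robustness of sectional-hyperbolicity on the maximal invariant set of a neighborhood with the Hausdorff-continuity of $Y\mapsto C(\sigma_Y)$ from item (3) of Lemma \ref{lemm: genericsets}. The only difference is presentational: the paper invokes robustness of sectional-hyperbolicity as a known fact in one line, whereas you unpack it via the cone-field estimates from the proof of Lemma \ref{SHneighborhood}, which is a valid (and more self-contained) justification of that same step.
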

\begin{proof}
   Let $X\in \mathcal{R}$ and $\Lambda$ a non-trivial sectional-hyperbolic class of $X$.  Recall that sectional-hyperbolicity is a robust property, i.e., there are neighborhoods $U$ of $\Lambda$ and $\mathcal{U}'_X$ of $X$ such that for every $Y\in \mathcal{U}'_X$ the set $\cap_{t\in\R}Y_t(U)$ is sectional-hyperbolic. Let   $\mathcal{U}_X$ be given by item 3 of Lemma \ref{lemm: genericsets}. Denote $\mathcal{U}_0=\mathcal{U}_X\cap\mathcal{U}'_X$. Since the map $Y\in \mathcal{U}_X\to C(\sigma)$ is continuous, after possibly shrinking $\mathcal{U}_X$, we can assume $C(\sigma_Y)\subset U$. Therefore, $C(\sigma_Y)$ is contained in the maximal invariant set of $U$ under $Y$. In particular, $C(\sigma_Y)$ is sectional-hyperbolic.  
\end{proof}

\begin{remark}
    By the above lemma, we see that Lemma \ref{lemma: hypsing} holds for every $Y\in\mathcal{U}_0$. 
\end{remark}

In order to prove Theorem \ref{Teohtop} in the singular case, we divided the argument into two steps: 
\begin{enumerate}
    \item To find a transverse intersection between the stable manifold of $\sigma_Y\in \Lambda_Y$ and the unstable manifold of some periodic orbit $\gamma_Y$ for $Y$. 
    \item To find a transverse intersection between the stable manifold of a periodic orbit $\gamma_Y$ for $Y$ (whose dimension is $\dim E^s$) and some disk $D^c$ of dimension $\dim F$, tangent to the central cone. 
\end{enumerate}

The next result gives us step 1 in our argument. 

\begin{lemma}\label{connection1}
Let $X\in \mathcal{R}$ and let $C(\sigma_X)\subset M$ be a non-trivial sectional-hyperbolic chain-recurrent class for $X$. Then, there is a neighborhood $\mathcal{U}_1\subset\mathcal{U}_0$ of $X$ such that for every $Y\in\mathcal{U}_1$, 
\begin{displaymath}
    W^s(\sigma_Y)\pitchfork W^u(\gamma_Y)\neq\emptyset,
\end{displaymath}
for some periodic orbit $\gamma_Y\in Per(Y)$. 
\end{lemma}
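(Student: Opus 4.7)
The plan is to produce, for the given $X \in \mathcal{R}$ itself, a hyperbolic periodic orbit $\gamma \subset C(\sigma_X)$ together with a \emph{transverse} heteroclinic intersection $W^u(\gamma) \pitchfork W^s(\sigma_X) \neq \emptyset$, and then invoke $C^1$-openness of transversal intersections of stable and unstable manifolds of hyperbolic critical elements. The crucial tool for producing the intersection is Lemma \ref{3.2}, applied to carefully chosen points on periodic orbits accumulating on $\sigma_X$.

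First, by Lemma \ref{thm: homoclinicclass} combined with item (2) of Lemma \ref{lemm: genericsets}, I pick a sequence of hyperbolic periodic orbits $\gamma_n \subset C(\sigma_X)$ with $d_H(\gamma_n, C(\sigma_X)) \to 0$ and points $p_n \in \gamma_n$ with $p_n \to \sigma_X$; after discarding finitely many terms, the $\gamma_n$ lie in the sectional-hyperbolic neighborhood $U$ of Lemma \ref{SHneighborhood}. Since $\sigma_X$ is Lorenz-like by Lemma \ref{lemma: hypsing}, Lemma \ref{3.2} supplies constants $r, T_0 > 0$, $\lambda \in (0,1)$ and $\alpha_2 > 0$. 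I then let $q_n \in \gamma_n$ be the last backward iterate of $p_n$ still contained in $B_r(\sigma_X)$, so that the forward orbit of $q_n$ enters $B_r(\sigma_X)$ at $q_n$ and remains inside it for a time $\tau_n \to \infty$ (because $p_n \to \sigma_X$).

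There are two main technical tasks. The first is to show $q_n \in \overline{D^{cs}_{\alpha_2}(\sigma_X)}$ for large $n$. A standard linearization argument around the hyperbolic fixed point $\sigma_X$, using $T_{\sigma_X}M = E^{cs} \oplus E^u$, forces any point whose forward orbit stays in $B_r(\sigma_X)$ for a long time to be exponentially close to $W^s_{loc}(\sigma_X)$: otherwise its $E^u$-component would grow exponentially and expel it from $B_r(\sigma_X)$. Applied to $q_n$, this places it in the cone $D^{cs}_{\alpha_2}(\sigma_X)$ once $\tau_n$ is large enough. The second task is to check that $q_n$ is a $(\lambda, T_0)$-expanding time for $\mathcal{F}$. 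Here I reproduce verbatim the estimate used in the proof of Lemma \ref{thm: homoclinicclass}: sectional expansion on $F$, via $|\det \Phi_t|_L| \geq K e^{\lambda t}$ on $L = \mathrm{span}(v, X)$ for any unit $v \in \mathcal{F}$, together with uniform lower bounds on $\|X(\cdot)\|$ outside a small neighborhood of $Sing(X)$, yields the required uniform product estimate for the scaled linear Poincaré flow restricted to $\mathcal{F}$.

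With $q_n$ in hand, Lemma \ref{3.2} delivers $W^u_{loc}(q_n) \pitchfork W^{cs}_{loc}(\sigma_X) \neq \emptyset$. Lemma \ref{lemma: hypsing} ensures $E^{cs} = E^s_{\sigma_X}$, so $W^{cs}_{loc}(\sigma_X) = W^s_{loc}(\sigma_X)$, and since $q_n \in \gamma_n$ we have $W^u_{loc}(q_n) \subset W^u(\gamma_n)$; therefore $W^u(\gamma_n) \pitchfork W^s(\sigma_X) \neq \emptyset$. Fixing one such $n$, both $\gamma_n$ and $\sigma_X$ are hyperbolic, so their continuations exist and compact pieces of their invariant manifolds vary continuously in the $C^1$-topology. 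Transversality being $C^1$-open, we obtain a neighborhood $\mathcal{U}_1 \subset \mathcal{U}_0$ of $X$ on which $W^u(\gamma_{n,Y}) \pitchfork W^s(\sigma_Y) \neq \emptyset$ for every $Y \in \mathcal{U}_1$; take $\gamma_Y := \gamma_{n,Y}$. The main obstacle I expect is the cone condition (the first technical task), since everything else reduces to a direct application of Lemma \ref{3.2}, a re-run of the estimates in the proof of Lemma \ref{thm: homoclinicclass}, or a routine transversality argument.
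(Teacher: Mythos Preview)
Your proposal is correct and follows essentially the same route as the paper's proof: both arguments take points on the periodic orbits $\gamma_n\subset C(\sigma_X)$ near the \emph{entry} into $B_r(\sigma_X)$ (the paper calls these $p^u_n$ and shows they converge to some $p^s\in W^s(\sigma_X)\cap C(\sigma_X)$, whereas you construct them as first-entry points $q_n$), verify they lie in the cone $\overline{D^{cs}_{\alpha}(\sigma_X)}$ and are $(\eta,T_0)$-expanding by re-running the telescoping estimate from Case~2 of Lemma~\ref{thm: homoclinicclass}, apply Lemma~\ref{3.2}, identify $W^{cs}_{loc}(\sigma_X)=W^{s}_{loc}(\sigma_X)$ via Lemma~\ref{lemma: hypsing}, and finish with $C^1$-robustness of transverse intersections between hyperbolic invariant manifolds. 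The only cosmetic difference is that the paper obtains the cone condition immediately from $p^u_n\to p^s\in W^s(\sigma_X)$, while you deduce it from the linearization/Hartman--Grobman observation that a point whose forward orbit lingers in $B_r(\sigma_X)$ for time $\tau_n\to\infty$ must be exponentially close to $W^s_{loc}(\sigma_X)$; these are two phrasings of the same fact.
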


\begin{proof}
To begin with, let $U$ be the neighborhood of $\Lambda$ given by Lemma \ref{SHneighborhood}. By Lemma \ref{lemm: genericsets} there is a sequence of periodic orbits of $\gamma_n\subset M$ converging to $\Lambda$ in the Hausdorff metric. So, we can assume $\gamma_n\subset U$ for every $n\geq 1$. Denote $$\Lambda'=\Lambda\cup \bigcup_{n\geq 1}\gamma_n.$$
 Notice that by Claim \ref{claim1} of Lemma \ref{nsyngularcase} and Lemma \ref{SHneighborhood} the set $\Lambda'$ is sectional hyperbolic.

 Now we assume that $\Lambda=C(\sigma_X)$, for some singularity $\sigma_X\in\Lambda$. Consider the  neighborhood $U$ from Lemma \ref{SHneighborhood}  and  $\Lambda'$. Let $T\Lambda'=E\oplus F$ be the sectional-hyperbolic splitting of $\Lambda'$. Note that the continuity of the splitting implies that $\dim(E)$ and $\dim(F)$ are constant along $\Lambda'$ and that the angles between $E$ and $F$ are uniformly bounded away from zero. Since $E$ is uniformly contracted, by the classical stable manifold theory, for every $x\in \Lambda'$  there is a well defined local stable manifold $W^{ss}_{loc}(x)$ with uniform size along $\Lambda'$.

Since $\Lambda'$ is not hyperbolic, there may not be well defined for local unstable manifolds for every point in $\Lambda'$. Moreover, even though these local unstable manifolds may exist for some points (e.g. along non-singular compact and invariant subsets) they do not need to have uniform size. To overcome this issue, we firstly recall that the periodic orbits $\gamma_n$ are hyperbolic by the hyperbolic lemma. So, fix $\gamma_n$ and let $$T\gamma_n=E'^s\oplus \langle X\rangle \oplus E'^u$$ be the hyperbolic splitting of $\gamma_n$. Since $E$ is uniformly contracted, we have $E(x)\subset E'^s(x)$, for every $x\in \gamma_n$. Now, suppose there is $x\in \gamma_n$ so that $E'^s(x)\not \subset E(x)$. In this case, there is a vector $v\in E'^s(x)$ such that $v=v^E + v^F$, $v^E\in E(x)$, $v^F\in F(x)$ and $v^F\neq 0$.  Moreover, since $v^F\in E'^s(x)$, then $v\notin\langle X(x)\rangle$. On the other hand,  by considering the subspace $L\subset F$ generated by $v^F$ and $\langle X(x)\rangle$, the sectional-expansion of $L$ implies,  $v^F\notin E'^s(x)$. This contradiction implies that $E(x)=E'^s(x)$ for every $x\in \gamma_n$. Consequently, $$\langle X(x) \rangle \oplus E'^u(x)=F(x),\quad \forall x\in \gamma_n.$$    

Choose $r>0$ as in Lemma \ref{intersections} such that  $B_{r}(\sigma_X)\cap Sing(X)=\{\sigma_X\}$. Since any singularity contained in $C(\sigma_X)$ is hyperbolic by Lemma \ref{lemma: hypsing}, there are only finitely many of them.  By shrinking $U$ if it is necessary, we have that $Sing(X|_U)=Sing(X|_{C(\sigma_X)})$ and hence, there is a neighborhood $V$ of $Sing(X|_{C(\sigma_X)})$ contained in $U$  satisfying  $$W^s_{loc}(\sigma)\not \subset V \textrm{ and } W^u_{loc}(\sigma)\not \subset V.$$ 
    Moreover, since $Sing(X|_{V})$ is finite, we can assume there is a  family of pairwise disjoint open sets $V_1,..., V_{j}$ such that:
    \begin{itemize}
    \item $V_i=B_r(\sigma_j)$, where $\lbrace\sigma_j : j=1,\ldots, k\rbrace=Sing(X|_{C(\sigma_X)})$. 
    \item $V=V_1\cup\cdots\cup V_j$.
    \item Each $V_i$ is a neighborhood of a unique singularity in $X|_{U}$.
    \end{itemize}

Fix $T_0>0$ in a such way that 
\begin{equation}\label{chooooseT0}
    Ke^{-\lambda t}\leq e^{-\lambda' t}\text{ and }K_0^2e^{-\lambda't}\leq e^{-\lambda'' t}, \quad\forall t\geq T_0,
\end{equation}
where $K,\lambda>0$ the constants given by sectional hyperbolicity of $\Lambda'$, $0<\lambda''<\lambda'<\lambda$ and $$K_0=\sup\left\{\frac{\Vert X(x) \Vert}{\Vert X(y) \Vert} :  x,y\in C(\sigma_X)\setminus V\right\}.$$ 

For any point $q\in V$, let $q_e$ be given as \eqref{entradasalida}. Suppose that $\phi_{[-T,0]}(q^n_{e})\subset C(\sigma_X)\setminus V$, where $T>T_0$. Define $\eta_0=e^{-\lambda''T_0}$.  

\begin{claim}
The orbit segment $\phi_{[-T,0]}(q_{e})$ is $(\eta_0,T_0)$-backward contracting for the bundle $\mathcal{F}$.     
\end{claim}
\begin{proof}
Let $0=t_0<t_1<\cdots<t_m=T$ such that $t_{j+1}-t_j=T_0$ for every $i=0,\ldots,m-2$ and $t_m-t_{m-1}\in(T_0,2T_0)$. Notice that the orbit segment $\phi_{[-T,0]}(q_{e})\subset C(\sigma_X)\setminus V$. Then, for every unitary vector $v\in\mathcal{F}_{q_{e}}$ one has 
\begin{eqnarray*}
    Ke^{-\lambda(t_{j+1}-t_j)}&\geq&\vert\det\Phi_{-(t_{j+1}-t_j)}(\phi_{t_{-j}}(q_{e}))\vert\\
    &\geq&\frac{\Vert X(X_{-t_{j}}(q_{e}))\Vert}{\Vert X(X_{-t_{j+1}}(q_{e}))\Vert}\cdot\Vert\psi_{-(t_{j+1}-t_j)}(\phi_{t_{-j}}(q_{e}))v_{j}\Vert,
\end{eqnarray*}
where $v_1=v$ and $v_{j+1}=\psi_{-(t_{j+1}-t_j)}(X_{-t_{j}}(q_n))v_{j}$ for any $j=1,\ldots, m-1$, and so, by \eqref{chooooseT0},
\begin{displaymath}
 \Vert\psi_{-(t_{j+1}-t_j)}(X_{-t_{j}}(q_{e}))\vert_{\mathcal{F}_{X_{-t_{j}}(q_{e})}}\Vert\leq \frac{\Vert X(X_{-t_{j+1}}(q_{e}))\Vert}{\Vert X(X_{-t_{j}}(q_{e}))\Vert}e^{-\lambda'(t_{j+1}-t_j)},\quad \forall j=0,\ldots, m-1.
\end{displaymath}

Thus, we have by the choice of $K_0$ in \eqref{chooooseT0} that 
\begin{eqnarray*}
 \prod_{j=0}^{m-1}\Vert\psi^*_{-(t_{j+1}-t_i)}(X_{-t_{j}}(q_{e}))\vert_{\mathcal{F}_{X_{-t_{j}}(q_{e})}}\Vert&\leq&\prod_{j=0}^{m-1}\left[\frac{\Vert X(X_{-t_{j+1}}(q_{e}))\Vert}{\Vert X(X_{-t_{j}}(q_{e}))\Vert}\right]^2e^{-\lambda'(t_{i+1}-t_i)}\\
 &\leq&\left[\frac{\Vert X(X_{-t_1}(q_{e}))\Vert}{\Vert X(q_e)\Vert}\right]^2e^{-m\lambda'T_0}\\
 &\leq&\left(K_0\right)^2e^{-m\lambda'T_0}\\
 &\leq& (e^{-\lambda''T_0})^m=\eta_0^m.
\end{eqnarray*}
This proves the claim.
\end{proof}
    
Notice that if $T_0>T_1>0$ and $0<\lambda_1<\lambda''$, then  
\begin{eqnarray}\label{implica}
    \lambda''T_0=\lambda''T_1\cdot\frac{T_0}{T_1}>\lambda''T_1
    &\Rightarrow& -\lambda''T_0<-\lambda''T_1<-\lambda_1T_1 \\ 
    &\Rightarrow& \eta_0=e^{-\lambda''T_0}<e^{-\lambda''T_1}<e^{-\lambda_1T_1}=\eta'. \nonumber
\end{eqnarray}
Hence, if the orbit segment $\phi_{[-T,0]}(q_{e})$ is $(\eta_0,T_0)$-backward contracting for the bundle $\mathcal{F}$, then it is also $(\eta_1,T_1)$-backward contracting for the same bundle $\mathcal{F}$ for every $0<T_1<T_0$ and $\eta\in(\eta',1)$.   

Since $\Pi(\gamma_n)\to+\infty$, we have by te Pliss Lemma that there are $\eta,\eta_1,T_1>0$ and $p_n\in\gamma_n$ such that $p_n$ is a $(\eta,T_1)$-backward hyperbolic point for $\mathcal{F}$ along the orbit of $\gamma_n$. Moreover, such points have positive density $a>0$ along the orbit of $\gamma_n$ for every $n\geq 1$. In this case, we have the following possibilities: 
\begin{itemize}
   \item $\sup\lbrace T>0 : X_{[-T,0]}(q_{e,n})\subset C(\sigma_X)\setminus V\rbrace< T_0$ for every $n\geq 1$. Since the density of $(\eta,T_1)$-backward hyperbolic times is $a>0$ it follows that there must be hyperbolic times $p'_n\in\gamma_n\cap V_{\sigma_X}$. In particular, there is $s_n\in\mathbb{R}$ such that $X_{s_n}(p_n)\in\overline{D^s_{\alpha}(\sigma_X)}$, or 
    \item there is a subsequence $\lbrace p_{n_k}\rbrace_{k\geq 1}$ such that $$\sup\lbrace T>0 : X_{[-T,0]}(q_{e,n_k})\subset C(\sigma_X)\setminus V\rbrace\geq T_0,\quad \forall k\geq 1.$$ In this case, if $p_{n_k}\in V_{\sigma_X}$, we proceed as item 1 above. If $p_{n_k}\notin V_{\sigma_X}$, then by the Claim 2 and \eqref{implica}, there is $s>0$ such that $X_s(p_{n_k})=q^{n_k}_{e}\in \overline{D^s_{\alpha}(\sigma)}$ is also a $(\eta,T_1)$-backward hyperbolic point for the bundle $\mathcal{F}$ for every $k\geq 1$. 
\end{itemize}

So, it follows from the both cases and by taking a sequence if it is necessary, that $q_n\in\gamma_n$ is a $(\eta,T_1)$ backward hyperbolic points inside $\overline{D^s_{\alpha}(\sigma)}$ for any $n\geq 1$. Therefore, by Lemma \ref{intersections} we have that 
 \[
W^u(p_n) \cap W^{s}(\sigma) \neq \varnothing.
\] 

Now, since $X\in\mathcal{R}$, by item (3) of Lemma \ref{lemm: genericsets} and Lemma \ref{SHneighborhood} we can find a neighborhood $U$ of $C(\sigma_X)$ and a neighborhood $\mathcal{U}_X$ of $X$ such that the map $Y\in \mathcal{U}_X\to C(\gamma_Y)$ is continuous and the maximal invariant set $\Lambda_Y$ for $Y\in \mathcal{U}_X$ in $U$ is sectional-hyperbolic for $Y$ and it contains $C(\sigma_Y)$.
In particular, there are $\delta_0\approx\delta$ and a neighborhood $\mathcal{U}_1\subset\mathcal{U}_X$ of $X$ such that $W^s(\sigma_Y)\pitchfork W_{\delta_0}^u(\gamma_Y)\neq\emptyset$ for any $Y\in \mathcal{U}_1$. This proves the result. 
\end{proof}

Let $\Lambda=C(\sigma_X)$ and $\Lambda_Y=C(\sigma_Y)$. Fix a neighborhood $U$ for $\Lambda$ and $\Lambda_Y$, given by the continuity of the map $Y\to C(\gamma_Y)$. Since $\Lambda$ is partially hyperbolic, it is possible to extend the sectional-hyperbolic  splitting $T_{\Lambda}M$ to $U$ (by shrinking $U$ if it is possible), that we will keep written as $T_UM=E\oplus F$. It should be noted that since $\Lambda$ is sectional-hyperbolic one has $1\leq \dim E\leq n-2$ and $1\leq \dim F\leq n-1$. Thus, the subspace $N_x$ is of codimension one and has a coordinate system induced by the sectional-hyperbolic splitting for every $x\in U$ in the following way: 
\begin{displaymath}
    \mathcal{E}_x=\pi(E_x)\text{ and }\mathcal{F}_x=\pi(F)\text{ or }F_x\cap N_x,\quad x\in U, 
\end{displaymath}
From partial hyperbolicity we have that the angle between these subspaces is uniformly bounded away from zero. Denote by $N(\eta)$ the standard cube $(-\eta,\eta)^{n-1}\subset N_x$, according to this coordinate system.

For any regular point $x\in U$, there is a number $\eta(x)>0$ such that the exponential map $Exp_x:N_x\to M$ is an isometry. Note that for every $\delta>0$, there is $\eta_{\delta}>0$ such that $\eta(x)>\eta_{\delta}$ for any $x\in U\setminus B(Sing(\Lambda),\delta)$. Let $\eta^*:U\to \mathbb{R}^+$ defined by $\eta^*(x)=\min\lbrace \eta(x),\eta_{\delta}\rbrace$ for every $x\in U$. Denote by $\mathcal{N}_x=Exp_x(N_x(\eta^*(x)))\subset M$, a codimension 1 cross section on $x$ of size $\eta^*(x)$. 

For a regular point $x$, the coordinates $\mathcal{E}_x$ and $\mathcal{F}_x$ allows us to define the volume of boxes $B\subset N_x$ with respect to these axis as follows: Denote by $\hat{\pi}_s:N_x\to\mathcal{E}_x$ and $\hat{\pi}_c:N_x\to\mathcal{F}_x$ the projections along the $\mathcal{E}_x$-axis and the $\mathcal{F}_x$-axis respectively. The stable (central) volume of $B$ is given by 
\begin{displaymath}
    \vert B\vert_{s(c)}:=vol_{s(c)}(\pi_{s(c)}(B)), 
\end{displaymath}
where $vol_{s(c)}(A)$ denotes the volume of a subset $A\subset \mathbb{R}^{\dim \mathcal{E}_x (\mathcal{F}_x)}$. 
We shall use a vector $v=(\varepsilon^s_1,\varepsilon^s_2,\ldots,\varepsilon^s_{n_s},\varepsilon^c_1, \varepsilon^s_2,\ldots,\varepsilon^c_{n_c})$, where $1\leq n_s\leq n-2$ and $1< n_c\leq n-1$ satisfy $n_s+n_c+1=n$, to describe the box $B[v]$ with stable volume $\varepsilon_s$ and central volume $\varepsilon_c$. 

Let $x\in U$ be a regular point does not belong to the stable manifold of the singularities of $X$, and $t>0$. By the Implicit Function Theorem that there is an open connected subset $D\subset\mathcal{N}_x$ containing $x$ and a continuous function $\tau:D\to N$ such that $\tau(x)=t$ and $X_{\tau(y)}(y)\in \mathcal{N}_{X_t(x)}$, for any $y\in D$. The map $\tau$ induces a diffeomorphism 
\begin{displaymath}
    H_x^t:dom(H_x^t)\subset \mathcal{N}_x\to\mathcal{N}_{X_t(x)},
\end{displaymath}
where $dom(H_x^t)$ is the maximal connected component of $\mathcal{N}_x$, containing $x$ in its interior, where $\tau$ is defined. the map $H_x^t$ is called \textit{the holonomy map between $\mathcal{N}_x$ and $\mathcal{N}_{X_t(x)}$ of time $t>0$}. 

Next we present, in a precise way, the step 2 in our argument. 

\begin{lemma}\label{connection2}
Let $X\in \mathcal{R}$ and let $C(\sigma_X)\subset M$ be a non-trivial sectional-hyperbolic chain-recurrent class for $X$, which is not reduced to either a homoclinic loop or a saddle connection between singularities. There is a neighborhood $\mathcal{U}\subset \mathcal{U}_2$ of $X$ such that,  for every $Y\in\mathcal{U}$, the periodic orbit $\gamma_Y$ given by Lemma \ref{connection1} is chain-attainable from $\sigma_Y$.
\end{lemma}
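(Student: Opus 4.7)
The direction $\gamma_Y \sim \sigma_Y$ is immediate from Lemma \ref{connection1}: any point $z \in W^s(\sigma_Y) \pitchfork W^u(\gamma_Y)$ has $\alpha$-limit $\gamma_Y$ and $\omega$-limit $\sigma_Y$, so sufficiently long segments of $\mathcal{O}(z)$ concatenate into $(\varepsilon,T)$-chains from $\gamma_Y$ to $\sigma_Y$. The content of the lemma is thus to establish the reverse direction $\sigma_Y \sim \gamma_Y$, uniformly for $Y$ in some $C^1$-neighborhood $\mathcal{U} \subset \mathcal{U}_2$ of $X$.

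The plan is to realize step (2) announced before the lemma. Fix a regular point $p \in W^u_{loc}(\sigma_Y) \setminus \{\sigma_Y\}$, consider the cross section $\mathcal{N}_p$, and choose a small $(\dim F - 1)$-dimensional disk $\Delta \subset \mathcal{N}_p$ through $p$ tangent to $\mathcal{F}_p$. Saturating $\Delta$ by a short flow arc yields a $\dim F$-dimensional disk $D^c \subset M$ at $p$, tangent to the central cone, which locally contains a plaque of $W^u_{loc}(\sigma_Y)$ at its core. Since
\begin{equation*}
\dim D^c + \dim W^s(\gamma_Y) \;=\; \dim F + (\dim E + 1) \;=\; n + 1,
\end{equation*}
a transverse intersection of some forward image $Y_t(D^c)$ with $W^s_{loc}(\gamma_Y)$ is of codimension zero in $D^c$, and any such intersection point $w$ produces the desired chain: its preimage $q = Y_{-t}(w) \in D^c$ lies within $\varepsilon$ of $W^u_{loc}(\sigma_Y)$ provided $D^c$ was chosen of diameter $<\varepsilon$, so $\sigma_Y$ is $\varepsilon$-connected to $q$ via $W^u(\sigma_Y)$, the $Y$-orbit pushes $q$ to $w$, and $w$ is $\varepsilon$-connected to $\gamma_Y$ along $W^s(\gamma_Y)$.

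To force this transverse intersection, we grow $Y_t(D^c)$ in the central direction. By Lemma \ref{acummulationpoint}, $C(\sigma_Y) \not\subset W^s(Sing(Y))$, so after possibly replacing $p$ by a nearby point of $W^u_{loc}(\sigma_Y) \setminus \{\sigma_Y\}$, its forward $Y$-orbit stays outside a fixed neighborhood of $Sing(Y)$ along a sequence $t_n \to \infty$. A Pliss-type selection applied to the scaled linear Poincar\'e flow $\psi^*_t$ along $\mathcal{O}^+(p)$, combined with the sectional expansion of $\mathcal{F}$ established in the proof of Lemma \ref{thm: homoclinicclass}, yields $(\eta, T_0)$-expanding times for the central bundle along that sequence. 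Liao's Theorem \ref{Liao} then furnishes at each $Y_{t_n}(p)$ an unstable disk tangent to the central cone of size $\delta \cdot \Vert Y(Y_{t_n}(p)) \Vert$, uniformly bounded below because $Y_{t_n}(p)$ is kept away from singularities; after shrinking the initial $D^c$, an inclination-lemma comparison places this disk inside $Y_{t_n}(D^c)$.

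The final transversality comes from Lemma \ref{lemm: genericsets}(2) and Remark \ref{remark3.5}: periodic orbits of $Y$ accumulate $C(\sigma_Y)$ in the Hausdorff topology, so $\gamma_Y$ may be taken arbitrarily close to a regular $\omega$-limit point of $p$, and the classical inclination lemma applied to the hyperbolic periodic orbit $\gamma_Y$ produces $Y_{t_n}(D^c) \pitchfork W^s_{loc}(\gamma_Y) \neq \emptyset$ for large $n$. The main obstacle is the geometric control of $D^c$ whenever $\mathcal{O}^+(p)$ passes close to a Lorenz-like singularity of $C(\sigma_Y)$, since there the scaled Poincar\'e flow degenerates: this is overcome by combining the cone estimate of Lemma \ref{3.2} with the persistence of sectional expansion across a small neighborhood of any hyperbolic singularity, so that the disk re-emerges with controlled central size and continues to grow. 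All constants depend only on the sectional-hyperbolic structure on a fixed neighborhood of $C(\sigma_X)$ (Lemma \ref{SHneighborhood}) and on the continuity $Y \mapsto C(\sigma_Y)$ from Lemma \ref{lemm: genericsets}(3), so the construction is uniform on a $C^1$-neighborhood $\mathcal{U} \subset \mathcal{U}_2$ of $X$.
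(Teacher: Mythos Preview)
Your overall strategy---grow a $\dim F$-dimensional disk tangent to the central cone until it meets $W^s(\gamma_Y)$ transversally---is the same as the paper's, but two of the steps you take to execute it do not go through as written.

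\textbf{The base point cannot be placed on $W^u(\sigma_Y)$.} You anchor the disk $D^c$ at a point $p\in W^u_{loc}(\sigma_Y)\setminus\{\sigma_Y\}$ and claim, via Lemma~\ref{acummulationpoint}, that after a small adjustment the forward orbit of $p$ leaves a fixed neighborhood of $\operatorname{Sing}(Y)$ along a sequence $t_n\to\infty$. Lemma~\ref{acummulationpoint} says only that $C(\sigma_Y)\not\subset W^s(\operatorname{Sing}(Y))$; it gives no information about points of $W^u(\sigma_Y)$, which need not lie in $C(\sigma_Y)$ and may, for a non-generic $Y\in\mathcal U$, all belong to $W^s(\operatorname{Sing}(Y))$ (nothing forbids a saddle connection for $Y$). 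The paper circumvents this by starting the box at an arbitrary $x\in C(\sigma_Y)\setminus W^s(\operatorname{Sing}(Y))$, growing it there, and only afterwards manufacturing the chain from $\sigma_Y$ to $x$ by using that the forward orbit of $x$ accumulates on $W^s_{loc}(\sigma^i)$ for some singularity $\sigma^i$ chain-related to $\sigma_Y$. Your shortcut of starting on $W^u(\sigma_Y)$ would indeed make the chain automatic, but you have not shown such a point with the required recurrence exists.

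\textbf{Generic properties of $X$ are invoked for $Y$.} You appeal to Lemma~\ref{lemm: genericsets}(2) and Remark~\ref{remark3.5} to say that periodic orbits of $Y$ accumulate $C(\sigma_Y)$ in the Hausdorff topology, and that $\gamma_Y$ may therefore be taken close to a regular $\omega$-limit point of $p$. Item~(2) is a statement about $X\in\mathcal R$, not about an arbitrary $Y\in\mathcal U$; and $\gamma_Y$ is the continuation of a \emph{fixed} periodic orbit of $X$ from Lemma~\ref{connection1}, not a freely chosen orbit of $Y$. The paper handles this robustly: it sets up cross-sections $\mathcal N_i^{in}$, records (item~(d)) a lower bound $d^s_Y$ for the distance from $W^s(\sigma_Y)$ to $\gamma_Y$ inside $\mathcal N^{in}$ that persists for all $Y\in\mathcal U$, and then argues that once the central volume of the iterated box exceeds $\tfrac{3}{2}d^s_Y$ it must cut $W^s(\gamma_Y)$. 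Your inclination-lemma sketch does not supply a substitute for this quantitative threshold, and the ingredients you cite are not available for $Y$.
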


\begin{proof}
 First, we recall that by Lemma \ref{lemma: hypsing} the singularities of $C(\sigma_X)$ are Lorenz-like. Let us denote $$Sing(X|_{C(\sigma_X)})=\{\sigma_X=\sigma^0,\sigma^1,...,\sigma^k\}.$$   
 
 Fix  $\delta>0$ so that $$\Vert X(z)\Vert\leq \Vert X(w)\Vert, \quad\forall z\in B(Sing(X|_{C(\sigma_x)}),\delta), w\notin B(Sing(X|_{C(\sigma_x)}),\delta).$$ Also, from item (2) of  Lemma \ref{lemm: genericsets} there is a sequence of periodic orbits $\lbrace\gamma_n\rbrace_{n\geq 1}$ of $X$ such that $d_H(\gamma_n,C(\sigma_X))\to 0$. In this case, let us assume $$d_H(\gamma_n,C(\sigma_X))<\delta/4,$$ for every $n\geq 0$. In particular, for each $i=0,,...,k$,  we can obtain sequences of points  $(p_{n,i}^s)_{n\geq 1}$ and $(p_{n,i}^u)_{n\geq1}$ such that  for every $i=0,...,k$:
\begin{itemize}
    \item $p_{n,i}^s, p_{n,i}^u$ are contained in $\gamma_n$, for every $n\geq 1$. 
    \item $p^u_{n,i}\to p_{u,i}\in C(\sigma_X)\cap W^u(\sigma^i)$,
    \item$p^s_{n,i}\to p_{s,i}\in C(\sigma_X)\cap W^s(\sigma^i)$,
    \item $d(p_{s,i},\sigma_i), d(p_{u,i},\sigma_i)=\delta$,
\end{itemize}

Next, for such a value of $\delta>0$, take a family of cross-sections $$\mathcal{N}_i^{in}=\mathcal{N}^{in}_{p_{s,i}}\subset \partial B(\sigma_i,\delta) \textrm{ and } \mathcal{N}_i^{out}=\mathcal{N}^{out}_{p_{u,i}}\subset \partial B(\sigma_i,\delta)$$ at $p_{s,i}$ and $p_{u,i}$ respectively. Note that the size $\eta$ of both $\mathcal{N}_i^{in}$ and $\mathcal{N}_i^{out}$ is bigger or equal than $\eta_{\delta}$. 
Moreover, for any $i=0,...,k$, the holonomy map $$H_i:Dom(H_i)\subset \mathcal{N}^{in}_i\setminus W^s_{loc}(\sigma^i)\to \mathcal{N}^{out}_i$$ is well defined. Since $p^{s}_{n,i}\to p_{s,i}$ and $p^{u}_{n,i}\to p_{u,i}$, we can  assume, without loss of generality, that $p^{s}_{n,i}\in \mathcal{N}_i^{in}$ and $p^{u}_{n,i}\in \mathcal{N}_i^{out}$, for every $n\geq 0$. Take $N\in\mathbb{N}$ satisfying  $$d^i_s=d(Q^s_{X,i},p^s_{N,i})<\eta/2 \textrm{ and } d^i_u=d(Q^u_{X,i},p^u_{N,i})<\eta/2,$$ where $$Q^{s}_{X,i}=\mathcal{N}^{in}_i\cap W^{s}(\sigma^i) \textrm{ and } Q^{u}_{X,i}=\mathcal{N}^{out}_i\cap W^{u}(\sigma^i).$$ 

Now, let consider the following cases:

\vspace{0.1in}
\textbf{Case 1: }There is $x\in C(\sigma_X)$ such that $\omega(x)\cap Sing(X)=\emptyset$.
\vspace{0.1in}
 
Since $\omega(x)$ has no singularities, it follows by Lemma \ref{hyplemma} that it is hyperbolic of saddle type. In particular, it has local stable and unstable manifolds of size $\varepsilon>0$. So, we obtain that $$W^u(z)\pitchfork W^s(\gamma_X)\neq\emptyset$$ for some $z\in\omega(x)\subset C(\sigma_X)$. Thus, since $z$ is chain attainable from $\sigma_X$, one has that $\gamma_X$ is chain attainable from $\sigma_X$. 

\vspace{0.1in}
\textbf{Case 2: }$\omega(x)\cap Sing(X)\neq\emptyset$ for any $x\in C(\sigma_X)$. 
\vspace{0.1in}

In this case, we denote $$\mathcal{N}^{in}=\bigcup_{i=0}^k\mathcal{N}_{i}^{in} \textrm{ and } \mathcal{N}^{out}=\bigcup_{i=0}^k\mathcal{N}_{i}^{out}.$$
The compactness of $C(\sigma_X)\setminus B(Sing(X|_{C(\sigma_X)},\delta)$ implies the existence of a positive number $T_{\delta}$ such that if $$z\in C(\sigma_X)\setminus B(Sing(X|_{C(\sigma_X)},\delta),$$ there is $0<t_z\leq T_{\delta}$  such that $X_{t_z}(z)\in \mathcal{N}^{in}$ and $X_{(0,t_z)}(z)\cap \mathcal{N}^{in}=\emptyset$.

By hypothesis on $C(\sigma_X)$, there is a point $x\in C(\sigma_X)\setminus W^s(Sing(X))$. Moreover, since the positive orbit of $x$ accumulates $Sing(X\vert_{C(\sigma_Y)})$, there is a sequence of times $t_n\to \infty$ and a singularity $\sigma^i\in Sing(X|_{C(X)})$ such that $X_{t_n}(x)\to \sigma^i$. In particular, the positive orbit of $x$ crosses $\mathcal{N}^{in}$ infinitely many times. Thus, we may assume  $x\in \mathcal{N}^{in}_i$ for some $i\in\lbrace 0,1,\ldots, k\rbrace$. Let us consider a box $B\subset \mathcal{N}^{in}_i$ containing $x$ with central volume $0<b<\frac{d^{s}_Y}{2}$.

\vspace{0.1in}
\textbf{Claim:} For any $T>0$, there is $T_b>T$, depending on $b$, such that $$X_{[0,T_{b}]}(B)\pitchfork W^s(\gamma_X)\neq\emptyset.$$

Indeed, let consider a unitary vector $\hat{v}\in\mathcal{F}_x$, and take the plane $L_x$ spanned by $\hat{v}$ and $Y(x)$. Note that $\hat{v}$ is either contained in $F_x$ or has the form $\hat{v}=\pi(v)$ for some $v\in F_x$. So, by denoting  $$K=\inf\left\{\frac{\Vert X(z)\Vert}{\Vert X(w)\Vert} :  z,w\notin B(\sigma_X,\delta)\right\},$$
the sectional-hyperbolicity implies  
\begin{eqnarray}\label{explane}
\Vert \pi_{N_{X_t(x)}}(\Phi_{t}(\hat{v}))\Vert&=&\frac{\Vert X(x)\Vert}{\Vert X(X_{t}(x))\Vert}\cdot\det(\Phi^X_{t}(x)\vert_{L_x}) \nonumber \\
    &\geq&\frac{\Vert X(x)\Vert}{\Vert X(X_{t}(x))\Vert}\cdot e^{\lambda t} \nonumber\\
    &\geq& Ke^{\lambda t},
\end{eqnarray}
whenever $t>0$, $X_{[0,t)}(x)\in U$ and $Y_t(x)\notin B(\sigma,\delta)$. 

Recall by the construction of the cross sections $\mathcal{N}^{in}$ and $\mathcal{N}^{out}$ that if $z\in \mathcal{N}^{in}_i$, then  $X_{[0,\tau(z))}(z)\in B(\sigma_X,\delta)\subset U$ and   $H_i(z)=X_{\tau(z)}(z)\in\mathcal{N}^{out}_i$. In this case, define $$\tau_i=\inf\{\tau(z) : z\in \mathcal{N}^{in}_i\}.$$ Notice that we can choose $\delta>0$ in a such way that $$Ke^{\lambda t}\geq e^{\lambda_ut},\quad\forall i=0,1,\ldots, k,\, \forall t>\tau_i,$$ where $0<\lambda_u<\lambda$.
Let $\mathcal{N}_{X_t(x)}$, $t>\tau(x)$, be a cross-section through $X_t(x)$ of radius at least $\eta_\delta$. Since $C(\sigma_Y)$ is not necessarily Lyapunov stable, the set $H^t_x(B)$ may not be entirely contained in $B(C(\sigma_X),2\delta)$ for some $t>\tau_i$. Nevertheless,  there is a connected component $B_1$ of $H_x^{t}(B)$ whose central volume is bigger than $b$ and $\overline{B}\subset B(C(\sigma_X),2\delta)$. In this case, we set $$r_1=\min\lbrace r>0 : X_r(z)\in \partial B(C(\sigma_X),2\delta)\text{ for some }z\in (H^r_x)^{-1}(B_1)\rbrace.$$  
Then, if $t=\tau(x)+r_1$,by shrinking $\mathcal{N}^{in}$ if it is necessary we have by \eqref{explane} that $$\Vert \pi_{N_{X_{\tau(x)+r}(x)}}(\Phi_{\tau(x)+r_1}(\hat{v}))\Vert\geq e^{\lambda_u(\tau+r_1)},\quad \forall \hat{v}\in \hat{F}_x.$$ Hence, 
the map $$H^{r_1}_x:Dom(H^{r_1}_x)\subset \mathcal{N}^{out}_i\to \mathcal{N}_{X_{t}(x)}$$   
satisfies $$H_i(B)\subset Dom(H^{r1}_x) \textrm{ and } \vert H^t_x(B)\vert_c=\vert H_x^{r_1}\circ H_i(B)\vert_c\geq e^{\lambda' (\tau_i+r)}b.$$ In particular, one has $$\vert H_x^{t}(B)\vert_c\geq e^{\tau_i+r_1}b>2b.$$ 
In this way, since $T_{\delta}<\infty$, we proceed in an inductively way to find a maximal $\ell\in\mathbb{N}$, positive numbers $r_1, r_2, \ldots, r_{\ell}$ such that $\tau+r_1+\cdots+r_{\ell}\leq t^1_x$, where $t^1_x>0$ is the first return time of the orbit of $x$ to $\mathcal{N}^{in}$,  and a connected component $B_{\ell}$ of $H_x^{\tau(x)+r_1+\ldots+r_{\ell}}(B)$ whose size is bigger than $2b$. 

By the choice of $\ell$, there is $s\geq 0$ such that $\tau+r_1+\cdots+r_{\ell}+s=t^1_x$. Besides, if $\hat{w}=\pi_{N_{X_{t_x^1-s}(x)}}(\Phi_{t^1_x-s}(\hat{v}))$, we have by \eqref{explane} and (a) that 
\begin{displaymath}
    \Vert \pi_{N_{X_{t_x^1}(x)}}(\Phi_{s}(\hat{w}))\Vert\geq \frac{\Vert X(x)\Vert}{\Vert X(X_{t}(x))\Vert}\cdot e^{\lambda s}\geq e^{\lambda s}>1.  
\end{displaymath}
Hence,  $\vert H_x^{t_x^1}(B)\vert_c>2b$. In particular, since $Y_{t^1_x}(x)\in \mathcal{N}^{in}$, it follows that there is a connected component $C_1\subset\mathcal{N}^{in}$ of $H_x^{t_x^1}(B)$ containing $x$ whose central volume is bigger than $2b$.

By repeating the previous process in an inductively way, there is $N\in\mathbb{N}$, and a connected component $C_N\subset \mathcal{N}^{in}$ of $H_x^{t_x^N}(B)$ whose central volume is bigger than $Nb>\frac{3}{2}d_s^i$ for every $i=0,\ldots, k$. Furthermore, we can choose $N$ and $C_N$ in a such way that $t_x^N>T$ and $C_N\subset \mathcal{N}_i^{in}$. So, we have that $X_{[0,s]}(C_N)$, $s>0$, is a submanifold of dimension $\dim F$ which intersects  $W^s(\sigma_j)$ transversely for some $j\in\lbrace 0,1,\ldots, k\rbrace$. Moreover, since the central volume of $C_N$ is bigger than $d^s_Y$, one has by (b) that $W^s(\gamma_j)\pitchfork X_{[0,s_0]}(C_N)\neq\emptyset$ for some $s_0>0$. So, the claim follows by taking $T_b=t_x^N+s_0$.

 To finish the proof, recall that the positive orbit of $x$ crosses $\mathcal{N}^{in}$ infinitely many times and, therefore, there is a sequence of times $t_n\to \infty$ and a singularity $\sigma^i\in Sing(X|_{C(X)})$ such that $X_{t_n}(x)\to x_0$ and $x_0\in W^s_{loc}(\sigma^i)$. Without loss of generality, we can assume $x_0,X_{t_n}(x)\in \mathcal{N}^{out}_i$, for every $n>0 $. Fix $\varepsilon$, $T>0$ and fix $N>0$ such that $d(X_{T_n}(x), x_0)\leq\varepsilon$, for every $n\geq N$. Fix $s_{\varepsilon}>T$ satisfying $d(X_{-s_{\varepsilon}}(x_0),\sigma^i)\leq \varepsilon$.
 Let $x_n\in \mathcal{N}^in$ be such that  $x_n=X_{t_N+s}(x)$ and $s>T$. We we consider a box $B$ around $x_n$ with diameter $\varepsilon$ and volume $b$, by the claim and the invariance of stable sets, there is $z\in B\cap W^s(\gamma_X)$. Therefore, there exist $s_z>T$ and $x_z\in \gamma_X$ such that $d(X_{s_z}(z),x_z)\leq \varepsilon$.  
 Finally,  the set $$\{\sigma^i,X_{-s_{\varepsilon}}(x_0),X_{t_n}(x),z,x_z\}$$ is an $\varepsilon$-$T$-chain from $\sigma^i$ to $\gamma_X$. Since $\sigma_X$ and $\sigma^i$ are chain related, the proof is complete.   
\end{proof}

Finally, we present the next result about the non-trivial dynamical behavior of the chain-recurrence classes $C(\sigma_Y)$ sufficiently $C^1$-close to a chain recurrent class $C(\sigma_X)$, with $X\in\mathcal{R}$ and such that $C(\sigma_X)$ is not reduced to either a homoclinic loop or a saddle connection between singularities.

\begin{lemma}\label{acummulationpoint}
Let $X\in \mathcal{R}$ and let $C(\sigma_X)\subset M$ be a non-trivial sectional-hyperbolic chain-recurrent class for $X$ which is not reduced to either a homoclinic loop or a saddle connection between singularities. Then, there is a neighborhood $\mathcal{U}_2\subset \mathcal{U}_1$ of $X$ such that for every $Y\in \mathcal{U}_2$ one has $C(\sigma_Y)\not\subset W^s(Sing(Y))$.
\end{lemma}

\begin{proof}
By Lemma \ref{connection1}, Lemma \ref{connection2} and item (2) of Lemma \ref{lemm: genericsets} we have that $C(\sigma_X)$ is a homoclinic class. Take two different periodic orbits $\gamma_0, \gamma_1$ in $C(\sigma_X)$, and let $d_{\gamma}=d_H(\gamma_0,\gamma_1)>0$. By continuity of $Y\in \mathcal{U} _X\to C(\gamma_Y)$ there is a neighborhood $\mathcal{U}_2\subset\mathcal{U}_1$ of $X$ such that
 \begin{equation}\label{closeperiodic}
d_H(C(\sigma_X),C(\sigma_Y))<\frac{d_{\gamma}}{2},\quad\forall Y\in\mathcal{U}_2.     
 \end{equation}
 Hence, if $C(\sigma_Y)\subset W^s(Sing(Y))$, then either it is a homoclinic loop or it is contained into  finitely many saddle connections $\gamma_{\alpha,\rho}$, associated to singularities $\alpha,\rho\in C(\sigma_Y)$. In any case, by \eqref{closeperiodic} one has $d_{\gamma}=d_H(\gamma_0,\gamma_1)<\frac{d_{\gamma}}{2}$, which is impossible. 
\end{proof}

Next, we are ready to prove Theorem \ref{Teohtop}. 

\begin{proof}[Proof of Theorem \ref{Teohtop}]
Let $\mathcal{R}$ be given by Lemma \ref{lemm: genericsets}. We consider the following cases:

\vspace{0.1in}
\textbf{Case 1: }$\Lambda$ does not contain singularities.
\vspace{0.1in}

By Lemma \ref{nsyngularcase}, it is a uniformly hyperbolic homoclinic class. In particular, there exists a neighborhood $\mathcal{U}_H$ of $\Lambda$ and a neighborhood $U$ of $\Lambda$ such that the maximal invariant set $H_Y$ contained in $U$ for any $Y\in\mathcal{U}_H$ is also hyperbolic of saddle type.

Now, by item (3) of Lemma \ref{lemm: genericsets}, there is a neighborhood $\mathcal{U}_X\subset\mathcal{U}_H$ of $X$ such that the continuation $\Lambda_Y$ of $\Lambda$ is contained in $H_Y$, so it is also hyperbolic of saddle type. Then, by compactness of $H_Y$ and the stable manifold Theorem, there are $\delta,\varepsilon>0$ such that 
\begin{equation}\label{transverseinnn}
    W_{\varepsilon}^s(z)\pitchfork W_{\varepsilon}^u(y)\neq\emptyset,\quad\forall z,y\text{ such that }d(z,y)<\delta.
\end{equation}

Let $x\in \Lambda$. Since $\Lambda$ is a homoclinic class there is a sequence of periodic orbits $\lbrace\gamma_n\rbrace_{n\geq 1}\subset\Lambda$ and a sequence of points $p_n\in\gamma_n$, $n\ge 1$, such that $p_n\to x$ when $n\to\infty$. Thus, by $n$ large enough, we can find a neighborhood $\mathcal{U}\subset \mathcal{U}_X$ of $X$ such that for any $Y\in\mathcal{U}$, there is a periodic orbit $\gamma_Y$ of $Y$ and a point $z\in\Lambda_Y$ such that $d(z,y)<\delta$. So, by \eqref{transverseinnn}, we have that $z\sim\gamma_Y$. This shows that $\gamma_Y\subset\Lambda$ by definition of chain class.

\vspace{0.1in}
\textbf{Case 2: }$\Lambda=C(\sigma_X)$ for some $\sigma_X\in Sing(X)$.
\vspace{0.1in}

By Lemma \ref{connection1}, Lemma \ref{connection2} and item (2) of Lemma \ref{lemm: genericsets} we have that $C(\sigma_X)$ is a homoclinic class and there is a neighborhood $\mathcal{U}_1$ of $X$ such that the conclusion of Lemma \ref{connection1} holds for every $Y\in\mathcal{U}_1$.

On the other hand, by item (3) of Lemma \ref{lemm: genericsets} and by follow the proof of Lemma \ref{connection2}, we deduce that the continuations $Sing(X|_{C(\sigma_Y)})$ consist entirely of Lorenz-like singularities, for any vector field $Y\in\mathcal{U}_2$.  
\begin{enumerate}
\item [(a)] For every $Y\in \mathcal{U}$, the singularities of $Y$ are Lorenz-like and $$\Vert Y(z)\Vert\leq \Vert Y(w)\Vert, \quad\forall z\in B(Sing(Y|_{C(\sigma_Y)}),\delta), w\notin B(Sing(Y|_{C(\sigma_Y)}),\delta).$$    
\item [(b)] For every $Y\in \mathcal{U}$ and $i=0,1,\ldots,k$, the submanifolds $\mathcal{N}^{in}_i$ and $\mathcal{N}^{out}_i$ are, respectively, cross-sections for $Y$ at $$p^Y_{s,i}\in C(\sigma^i_Y)\cap W^{s}(\sigma^i_Y) \textrm{ and }  p^Y_{y,i}\in C(\sigma^i_Y)\cap W^{u}(\sigma^i_Y),$$ where $p^Y_{s,i},p^Y_{u,i}\in \gamma_Y$ and $\gamma_Y$ is the periodic orbit given by Lemma \ref{connection1}. 
\item [(c)] For every $Y\in \mathcal{U}$ and $i=0,1,\ldots,k$, the holonomy map $H^Y_i$, and the cross-sections $\mathcal{N}^{in}_i$ and $\mathcal{N}^{out}_i$, with respect to the flow of $Y$, satisfy: $$(\mathcal{N}^{in}_i\setminus W^s_{loc}(\sigma^i_Y)) \subset dom(H_{i}^{Y}) \textrm{ and } H_i^Y(Dom(H^Y_i))\subset\mathcal{N}^{out}_i.$$   
    \item [(d)] For every $Y\in\mathcal{U}$ and $i=0,1,\ldots,k$, there exist $y^{s}_{0,i}, y^{u}_{0,i}\in C(\sigma_Y)$  such that $y^{s}_{0,i}\in \mathcal{N}^{in}_i$, $y^{u}_{0,i}\in \mathcal{N}^{out}_i$ and  
    \begin{displaymath}
    d(Q_{Y,i}^{s,u},y_{0,i}^{s,u}),d(Q_{Y,i}^{s,u},\gamma_{Y})\approx d^i_{s,u}, 
    \end{displaymath}
    where $Q^{s,u}_{Y_i}$ is the continuation of $Q^{s,u}_{X,i}$. This shows, in particular, that
\begin{displaymath}
    d^{s,u}_{Y,i}=\sup\lbrace d(y,Q^{s,u}_{Y,i}) : y\in C(\sigma_Y)\rbrace\geq d(Q^{s,u}_{Y,i},y_{0,i}^{s,u})\approx d^i_{s,u}. 
\end{displaymath}
In this case, define $$d^{s,u}_Y=\min\left\lbrace d^{s,u}_{Y,i} : i=0,\ldots,k\right\rbrace.$$
\end{enumerate}
By Lemma \ref{acummulationpoint} there is a neighborhood $\mathcal{U}_2\subset\mathcal{U}_1$ the continuation $C(\sigma_Y)$, there is a point $x\in C(\sigma_Y)\setminus W^s(Sing(Y))$ for every $Y\in\mathcal{U}_2$, so that,  by shrinking $\mathcal{U}_2$ if it is necessary, we follow Case 1 and Case 2 of the proof of Lemma \ref{connection2} verbatim, replacing $X$ for $Y$, to get a periodic orbit $\gamma_Y$ such that $\sigma_Y\sim\gamma_Y$ and $\gamma_Y\sim \sigma_Y$. This shows that $\gamma_Y\subset C(\sigma_Y)$ by definition of chain class. 
\end{proof}

\section{Proof of the Main Theorem}

 This section is devoted to prove our main results. Before proving the Main Theorem, we briefly outline the main idea behind the argument. The strategy combines the density of periodic orbits in non-trivial sectional-hyperbolic chain-recurrence classes, and the denseness of its stable and unstable manifolds. 
In this way, for proving the Main Theorem  we need to prove the following lemma:

\begin{lemma}\label{lemamatador}
Let $\mathcal{R}$ be the generic set given by Lemma \ref{lemm: genericsets},  $X\in \mathcal{R}$ and $C(\sigma)$ be a non-trivial chain-recurrent class  of $X$ containing a periodic orbit $\gamma$. There is an open neighborhood $\mathcal{U}_X$ of $X$ such that if $Y\in \mathcal{U}_X$, then for every $x\in C(\sigma_Y)$ and every neighborhood $V_x$ of $x$, we have    $$ W^s(\gamma_Y)\cap V_x\neq\emptyset \textrm{ and }  W^u(\gamma_Y)\cap V_x\neq\emptyset,$$
where $\gamma_Y\subset C(\sigma_Y)$ is a periodic orbit.
\end{lemma}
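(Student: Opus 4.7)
The plan is to extract a density statement from the volume-stretching argument inside the Claim in the proof of Lemma~\ref{connection2}, and then to run a dual argument in backward time for $W^u(\gamma_Y)$. I begin by shrinking the neighborhood $\mathcal{U}$ from Theorem~\ref{Teohtop} to a $C^1$-open neighborhood $\mathcal{U}_X$ so that, for every $Y\in\mathcal{U}_X$, the class $C(\sigma_Y)$ is sectional-hyperbolic and contains a periodic orbit $\gamma_Y$. I keep the geometric data introduced in the proof of Lemma~\ref{connection2}: the Lorenz-like singularities $Sing(Y|_{C(\sigma_Y)})=\{\sigma_Y^0,\ldots,\sigma_Y^k\}$, the cross-sections $\mathcal{N}^{in}_i,\mathcal{N}^{out}_i$, the thresholds $d^s_Y,d^u_Y>0$, and the projected splitting $N=\mathcal{E}^s\oplus\mathcal{F}$ on the normal bundle over a neighborhood of $C(\sigma_Y)$.

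For the density of $W^s(\gamma_Y)$, fix $x\in C(\sigma_Y)$ and a neighborhood $V_x$. I may assume $x$ is regular, since singularities and periodic points are approximated in $C(\sigma_Y)$ by regular points. Choose a small cross-section $\mathcal{N}_x\subset V_x$ at $x$ and a box $B\subset \mathcal{N}_x$ around $x$ tangent to $\mathcal{F}_x$ of small central volume $b>0$. I split into two subcases mirroring the case analysis of Lemma~\ref{connection2}. If $\omega(x)\cap Sing(Y)\neq\emptyset$, then the positive orbit of $x$ enters some $\mathcal{N}^{in}_i$ infinitely often, so after finitely many first-return passes through $\mathcal{N}^{in}$ the central volume of a component of the holonomy image of $B$ is multiplied by the sectional-expansion factor enough times to exceed $d^s_Y$; by item (b) in the proof of Lemma~\ref{connection2}, this forces a transverse intersection of some $Y_{[0,s]}(B)$ with $W^s(\gamma_Y)$, and by $Y$-invariance of $W^s(\gamma_Y)$ I conclude $B\cap W^s(\gamma_Y)\neq\emptyset$. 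If instead $\omega(x)\cap Sing(Y)=\emptyset$, then $\omega(x)$ is hyperbolic of saddle type by Lemma~\ref{hyplemma} and lies in the same chain class as $\gamma_Y$; using its uniform local (un)stable manifolds, the chain-transitivity of $C(\sigma_Y)$ and the $\lambda$-lemma, I connect $W^u_{loc}(\omega(x))$ to $W^s(\gamma_Y)$, and a second application of the $\lambda$-lemma at iterates of $x$ accumulating on $\omega(x)$ yields the desired transverse intersection of a forward iterate of $B$ with $W^s(\gamma_Y)$.

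For the density of $W^u(\gamma_Y)$, a dual argument runs in backward time: the $\mathcal{F}$-box is replaced by a box $B\subset\mathcal{N}_x$ tangent to $\mathcal{E}^s_x$ of small stable volume. Since $E^s$ is uniformly contracted by $\Phi_t$, the projection $\mathcal{E}^s$ is uniformly contracted by $\psi^*_t$ and therefore uniformly expanded by $\psi^*_{-t}$; hence the backward holonomy images of $B$ grow exponentially in stable volume. Depending on whether $\alpha(x)\cap Sing(Y)$ is empty or not, the same two-case analysis, with the role of $d^s_Y$ replaced by $d^u_Y$ and of $\mathcal{N}^{in}_i$ by $\mathcal{N}^{out}_i$, yields $B\cap W^u(\gamma_Y)\neq\emptyset$.

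The main difficulty will be the hyperbolic-$\omega$-limit subcase: since $\omega(x)$ need not be isolated, chain-transitive, or persistent under perturbation of $Y$, one cannot invoke shadowing or continuation of $\omega(x)$ directly. I plan to handle this by embedding $\omega(x)$ into the maximal hyperbolic invariant set of a small tubular neighborhood, where uniform local (un)stable manifolds are available (as in Case~1 of the proof of Lemma~\ref{thm: homoclinicclass}, using Lemma~\ref{SHneighborhood}), and then combining chain-transitivity of $C(\sigma_Y)$ with the $\lambda$-lemma to force transverse intersections with the invariant manifolds of $\gamma_Y$ that persist for every $Y\in \mathcal{U}_X$.
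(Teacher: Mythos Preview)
Your plan for the density of $W^s(\gamma_Y)$ is workable and close in spirit to the paper's; both exploit sectional-expansion of center-unstable discs. The paper, however, avoids your case-split on $\omega(x)$ by arguing directly via chain-transitivity (Lemma~\ref{densestable}): once a cu-disc around $Y_t(x)$ has inner radius $\varepsilon$ (Lemma~\ref{lem:bigdiscs}), an $\eta$-chain inside $C(\sigma_Y)$ from $Y_t(x)$ to $\gamma_Y$ lets one propagate an intersection with $W^s(\gamma_Y)$ backward along the chain, using the uniform-size local strong stable leaves at each link. This neatly sidesteps the non-isolated hyperbolic $\omega(x)$ difficulty you flag at the end.

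The genuine gap is the ``dual argument'' for $W^u(\gamma_Y)$. Sectional-hyperbolicity is not time-symmetric, and the asymmetry bites exactly here. In the forward argument the growing object (a cu-box, tangent to $\mathcal F$) must meet $W^s(\gamma_Y)$, whose plaques are tangent to $\mathcal E^s$ and have \emph{uniform} size because $E$ is uniformly contracted; once the cu-box is wide enough in the $\mathcal F$-directions, a crossing is forced. In backward time your growing object is an $\mathcal E^s$-box, while the target plaques of $W^u(\gamma_Y)$ are tangent to $\mathcal F$ and have \emph{no} uniform size away from $\gamma_Y$ --- this non-uniformity is precisely the obstacle pointed out in Case~2 of the proof of Lemma~\ref{thm: homoclinicclass}. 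Concretely, in $\mathcal N^{out}_i$ the $\mathcal E^s$-box sits at a fixed $\mathcal F$-coordinate determined by the backward orbit of $x$; lengthening it in the $\mathcal E^s$-directions does nothing to guarantee that any $W^u(\gamma_Y)$-plaque covers that particular $\mathcal F$-coordinate, so the threshold $d^u_Y$ is irrelevant. The paper therefore handles the unstable side by an entirely different route, quoting Theorem~3.1 of \cite{CY} as Lemma~\ref{stablem}: for $Y$ near $X$ and $x\in C(\sigma_Y)$ not on the unstable manifold of a singularity, one has $W^{ss}_{loc,Y}(x)\pitchfork W^u_Y(\gamma_Y)\neq\emptyset$, and Lemma~\ref{lem: stable} then follows by a short inclination-lemma argument. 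You will need that input, or something of comparable strength; a naive time-reversal of the cu-box expansion does not suffice.
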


To improving our exposition, we shall split the proof of Lemma \ref{lemamatador} into four separate lemmas. Let us begin with the first of them which is stated as follows:

 \begin{lemma}[Theorem 3.1 in \cite{CY}]\label{stablem}
     There is a residual set $\mathcal{R}\subset \mathcal{X}^1(M)$ such that if $X\in \mathcal{R}$ and $C(\sigma)$ is a non-trivial sectional-hyperbolic chain-recurrent class of $X$ and $\gamma\subset C(\sigma)$, then there are neighborhood $\mathcal{U}$ of $X$ and a neighborhood $U$ of $C(\sigma)$ such that:

     If $Y\subset \mathcal{U}$ and $x\in C(\sigma_Y)$, then either $x$ is on the unstable manifold of a singularity or $W^{ss}_{loc,Y}(x)\pitchfork W_Y^{u}(\gamma_Y)\neq\emptyset$. 
 \end{lemma}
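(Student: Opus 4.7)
The plan is to combine two generic ingredients—density of $W^u(\gamma_X)$ in the homoclinic class $H(\gamma_X)$, and continuous dependence of both the chain class and its sectional-hyperbolic splitting on $Y$—with the sectional-expansion of the central bundle $F$ on a neighborhood of $C(\sigma)$. First I would refine the residual set $\mathcal{R}$ of Lemma \ref{lemm: genericsets} to incorporate the classical $C^1$-generic property that, for every hyperbolic periodic orbit $\gamma_X$ of $X\in\mathcal{R}$, the unstable manifold $W^u(\gamma_X)$ is dense in the homoclinic class $H(\gamma_X)$. Combined with Lemma \ref{thm: homoclinicclass}, this yields $W^u(\gamma_X)$ dense in $C(\sigma)$.

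Next, I would apply Lemma \ref{SHneighborhood} to extend the sectional-hyperbolic splitting $E\oplus F$ to the maximal invariant set in a neighborhood $U$ of $C(\sigma)$, and combine item (3) of Lemma \ref{lemm: genericsets} with Lemma \ref{prop2.4CY} to produce a $C^1$-neighborhood $\mathcal{U}$ of $X$ for which $C(\sigma_Y)\subset U$ is sectional-hyperbolic with a persistent splitting for every $Y\in\mathcal{U}$. Classical invariant-manifold theory then furnishes, for every regular $z\in C(\sigma_Y)$, a strong stable manifold $W^{ss}_{loc,Y}(z)$ of uniform size tangent to $E(z)$, depending continuously on $(Y,z)$; simultaneously, $W^u_Y(\gamma_Y)$ varies continuously with $Y$ on compact pieces.

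Now fix $Y\in\mathcal{U}$ and $x\in C(\sigma_Y)\setminus W^u_Y(\mathrm{Sing}(Y))$. Semicontinuity of the continuation and density of $W^u(\gamma_X)$ in $C(\sigma_X)$ yield a sequence $y_n\in W^u_Y(\gamma_Y)\cap C(\sigma_Y)$ accumulating on $x$. Through each $y_n$ I would take a small disk $D_n\subset W^u_Y(\gamma_Y)$ tangent to $F$ and iterate it forward. By sectional-expansion of $F$, the central volume of $Y_t(D_n)$ grows exponentially with the time spent away from small neighborhoods of the singularities. Because $x$ is not on $W^u_Y(\mathrm{Sing}(Y))$, a return-map analysis along the cross-sections $\mathcal{N}^{in}_i$, $\mathcal{N}^{out}_i$ analogous to the one in Case 2 of the proof of Lemma \ref{connection2} guarantees that, at a suitable time $t_n$, $Y_{t_n}(D_n)$ contains a disk of uniformly large central size through a point near $x$. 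Since $\dim E+\dim F=\dim M$, such a disk must intersect the transversal $W^{ss}_{loc,Y}(x)$ transversely, and invariance of $W^u_Y(\gamma_Y)$ delivers $W^{ss}_{loc,Y}(x)\pitchfork W^u_Y(\gamma_Y)\neq\emptyset$.

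The main obstacle will be the quantitative control of the sectional-expansion when the orbit of $y_n$ repeatedly visits neighborhoods of the singularities, and in making the intersection transverse (not merely nonempty). The hypothesis $x\notin W^u_Y(\mathrm{Sing}(Y))$ is essential here: if $x$ lay on the unstable manifold of a singularity, the approximating $y_n$ could be forced to shadow a saddle connection, where the central direction degenerates and the transverse-expansion mechanism can fail. Away from this locus, the Lorenz-like structure of singularities (Lemma \ref{lemma: hypsing}) together with the cross-section estimates derived in Section 3 provides the quantitative expansion required, completing the argument.
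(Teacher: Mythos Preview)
The paper does not give its own proof of this lemma: it is quoted as Theorem~3.1 of \cite{CY}, and the remark immediately following it explains that the Lyapunov-stability hypothesis present in \cite{CY} is never actually used in that proof, so the argument of \cite{CY} applies verbatim. There is therefore nothing in the present paper to compare your proposal against beyond the original source.

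That said, your proposed argument has a genuine gap. You assert that ``semicontinuity of the continuation and density of $W^u(\gamma_X)$ in $C(\sigma_X)$ yield a sequence $y_n\in W^u_Y(\gamma_Y)\cap C(\sigma_Y)$ accumulating on $x$.'' This does not follow: to approximate an arbitrary $x\in C(\sigma_Y)$ by points of $W^u(\gamma_X)$ one may need arbitrarily long pieces of $W^u(\gamma_X)$, and the $C^1$-neighborhood of $X$ on which such a long piece persists for $Y$ shrinks with its length; there is no fixed $\mathcal{U}$ on which you obtain $y_n\to x$ for every $x$. In fact, density of $W^u_Y(\gamma_Y)$ in $C(\sigma_Y)$ is exactly the content of Lemma~\ref{lem: stable}, which the paper derives \emph{from} the present lemma, so this step is essentially circular.

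The subsequent mechanism is also miswired. You iterate the disks $D_n\subset W^u_Y(\gamma_Y)$ \emph{forward} and invoke the hypothesis $x\notin W^u_Y(\mathrm{Sing}(Y))$ to control the passages near singularities. But $x\notin W^u_Y(\mathrm{Sing}(Y))$ constrains the \emph{backward} orbit of $x$ (it prevents $Y_{-t}(x)$ from converging to a singularity); it says nothing about the forward orbit of $x$ or of nearby $y_n$, which may perfectly well accumulate on $\mathrm{Sing}(Y)$. The argument in \cite{CY} runs in the opposite direction: one exploits that $Y_{-t}(x)$ makes long excursions away from the singularities, along which the sectional-hyperbolic estimates give uniform-size strong stable leaves and uniform growth of $cu$-disks, and it is this backward control that produces the transverse intersection with $W^{ss}_{loc,Y}(x)$. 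Your forward-iteration scheme, combined with a cross-section analysis borrowed from Lemma~\ref{connection2}, does not provide a substitute for this, and in particular gives no reason why $Y_{t_n}(D_n)$ should contain a large $cu$-disk \emph{through a point near $x$} rather than merely somewhere in $U$.
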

\begin{remark}
    The attentive reader will notice that Lemma~\ref{stablem} is stated under slightly different assumptions than in \cite{CY}, where it is assumed that $C(\sigma)$ is a Lyapunov-stable class. However, a careful analysis of the proof in~\cite{CY} reveals that this additional assumption is not actually used. Therefore, the same argument applies verbatim to the version we present here. For this reason, we omit the proof of Lemma~\ref{stablem} to avoid unnecessary repetition.
\end{remark}

The next lemma gives us one of the inequalities in Lemma \ref{lemamatador}:

\begin{lemma}\label{lem: stable}
    Under the assumption of Theorem \ref{Teohtop}, $W^u(\gamma_Y)$ is dense on $C(\sigma_Y)$.
\end{lemma}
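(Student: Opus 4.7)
The plan is to fix $Y\in \mathcal{U}$ (the common neighborhood of $X$ produced by Lemma \ref{Teohtop}, Lemma \ref{stablem} and Lemma \ref{connection1}) and show that every $x\in C(\sigma_Y)$ belongs to $\overline{W^u(\gamma_Y)}$. The natural dichotomy, suggested by Lemma \ref{stablem}, is whether or not $x$ lies on the unstable manifold of some singularity in the class. I would treat the two cases separately and reduce both to Lemma \ref{stablem}, Lemma \ref{connection1}, and the invariance of $W^u(\gamma_Y)$.

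First, suppose $x$ is not on the unstable manifold of any singularity of $Y$ in $C(\sigma_Y)$. Then Lemma \ref{stablem} provides a point
\[
z\in W^{ss}_{loc,Y}(x)\pitchfork W^u(\gamma_Y).
\]
Because $W^u(\gamma_Y)$ is $Y_t$-invariant, $Y_t(z)\in W^u(\gamma_Y)$ for every $t$, and by definition of the strong stable manifold, $d(Y_t(z),Y_t(x))\to 0$ as $t\to+\infty$. Hence $Y_t(x)\in \overline{W^u(\gamma_Y)}$ for all sufficiently large $t$, and since the closure is $Y_t$-invariant, I can pull back by the flow to conclude $x\in \overline{W^u(\gamma_Y)}$.

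Second, suppose $x\in W^u(\sigma^i)$ for some singularity $\sigma^i\in \mathrm{Sing}(Y|_{C(\sigma_Y)})$. By Lemma \ref{connection1} together with Remark \ref{remark3.5} (applied to each singularity of the class, and which persists under small $C^1$-perturbations because the intersection is transverse), one has
\[
W^u(\gamma_Y)\pitchfork W^s(\sigma^i)\neq \emptyset.
\]
Since $\sigma^i$ is Lorenz-like and hence hyperbolic (Lemma \ref{lemma: hypsing}), the inclination lemma applied to any small disk of $W^u(\gamma_Y)$ transverse to $W^s(\sigma^i)$ shows that its forward iterates accumulate, in the $C^1$ topology, on $W^u_{loc}(\sigma^i)$; by invariance of $W^u(\gamma_Y)$ these iterates stay inside $W^u(\gamma_Y)$, so $W^u_{loc}(\sigma^i)\subset \overline{W^u(\gamma_Y)}$. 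Saturating by the flow gives $W^u(\sigma^i)\subset \overline{W^u(\gamma_Y)}$, in particular $x\in \overline{W^u(\gamma_Y)}$.

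The main technical point will be the verification of the inclination lemma at the singularities: for flows this is classical once the singularity is hyperbolic, but I need to ensure the transverse intersection provided by Lemma \ref{connection1} and Remark \ref{remark3.5} continues to hold for the perturbed field $Y$, which requires choosing $\mathcal{U}$ so that both the persistence of the transverse heteroclinic intersections and the conclusions of Lemma \ref{stablem} are simultaneously valid. Once this is arranged, the two cases above cover every $x\in C(\sigma_Y)$ and the density $C(\sigma_Y)\subset \overline{W^u(\gamma_Y)}$ follows.
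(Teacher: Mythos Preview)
Your case division and choice of tools coincide with the paper's proof, and your treatment of the singularity case via Lemma~\ref{connection1}, Remark~\ref{remark3.5} and the inclination lemma is correct and essentially identical to the paper's Case~1.

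There is, however, a genuine logical slip in your regular case. From $d(Y_t(z),Y_t(x))\to 0$ and $Y_t(z)\in W^u(\gamma_Y)$ you may only conclude that $d\bigl(Y_t(x),\,W^u(\gamma_Y)\bigr)\to 0$; this does \emph{not} imply $Y_t(x)\in \overline{W^u(\gamma_Y)}$ for any particular $t$. For a fixed $t$ you have produced exactly one nearby point $Y_t(z)$ at some positive distance from $Y_t(x)$, not a sequence in $W^u(\gamma_Y)$ converging to $Y_t(x)$. So the ``$Y_t(x)\in\overline{W^u(\gamma_Y)}$ for large $t$, then pull back by invariance'' step does not close; knowing that the orbit of $x$ asymptotically approaches $\overline{W^u(\gamma_Y)}$ does not place $x$ itself (nor any $Y_t(x)$) in that closure.

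The fix is precisely what the paper does: apply Lemma~\ref{stablem} at $Y_{-t}(x)$ rather than at $x$. Since $x\notin W^u(\sigma')$ and unstable manifolds are invariant, $Y_{-t}(x)\in C(\sigma_Y)$ is still off every singular unstable manifold, so the lemma yields $y_{-t}\in W^{ss}_{loc,Y}(Y_{-t}(x))\cap W^u(\gamma_Y)$. Uniform contraction of the strong stable lamination then gives $d\bigl(Y_t(y_{-t}),x\bigr)\le Ce^{-\lambda t}$, so $Y_t(y_{-t})\in W^u(\gamma_Y)$ lies in any prescribed neighborhood of $x$ once $t$ is large. This produces points of $W^u(\gamma_Y)$ arbitrarily close to $x$, which is what is needed.
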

\begin{proof}
  Let $Y\in \mathcal{U}$, $x\in C(\sigma_Y)$ and let $V$ be a neighborhood of $x$. We claim that $W^u(\gamma_Y)\cap V\neq \emptyset$. Indeed, we have the following cases: 

\textbf{Case 1:} Assume $x\in W^u(\rho)$, for some $\rho\in C(\sigma_Y)$. In this case, for $\varepsilon>0$ there is $t>0$ such that $d(\sigma,Y_{-t}(x))<\varepsilon$. Take a neighborhood $W\subset M$ of $Y_{-t}(x)$ such that $Y_t(W)\subset V$. By Remark \ref{remark3.5} one has $W^u(\gamma_Y)\pitchfork W^s(\rho)$. So, if $\varepsilon$ is small enough, we have by the inclination lemma that there is a point $y_t\in W^u(\gamma_Y)\cap W$, so that $Y_t(y_t)\in V\cap W^u(\gamma_Y)$.  

\textbf{Case 2:} Assume $x\notin W^u(\sigma'_Y)$ and let $\varepsilon>0$ such that $B(x,\varepsilon)\subset V$. By Lemma \ref{stablem}, for every $t>0$, one has there is $y_{-t}\in W^{ss}_{loc}(Y_{-t}(x))\pitchfork W^u(\gamma_Y)$. Since $W^{ss}_{loc}(Y_t(x))$ is uniformly contracted by sectional-hyperbolicity, with rate $\lambda_Y>0$, we take $t>0$ in a such way that $d(Y_t(y_t),x)\leq Ce^{-\lambda_Yt}<\varepsilon$. So, $Y_t(W^{ss}_{loc}(Y_{-t}(x))\subset V$. This shows that $Y_t(y_t)\in V\cap W^u(\gamma_Y)$, and therefore $W^u(\gamma_Y)\cap V\neq \emptyset$. 
\end{proof}

For the next lemmas we need to recall another type of cross-section that were defined in \cite{CY}. These cross-section are different from the cross-sections introduced in Section \ref{section:robustper} and hence here we call them adapted cross-sections to avoid confusion.  Recall that since $X$ is a generic vector field, one can assume that $X$ has finitely many hyperbolic singularities. Thus, we can suppose the same follows for the any vector field in $\mathcal{U}$.  Following \cite[Section 4.2]{CY} for any  singularity singularity $\sigma\in C(\sigma_Y)$, there is a co-dimension one  submanifold $S_\sigma$ so that for every $\sigma\in C(\sigma_Y)$ it 
    holds:
    \begin{itemize}
        \item The submanifolds $\overline{S_\sigma}$ are pairwise disjoint
        \item $\overline{S_\sigma}\cap Sing(Y)=\emptyset$ and the angle between $Y(x)$ and $T_xS_\sigma$ is bigger than $\alpha_\sigma>0$, for every $x\in S_\sigma$.
    \end{itemize}
    
    By considering $\alpha=\min\{\alpha_\sigma; \sigma\in Sing(Y)\}$ and $S_Y=\cup_{\sigma \in Sing(Y)} S_\sigma$ it holds that there is a compact set $\Delta\subset S_Y$ such that 

    \begin{itemize}
        \item $\overline{S_Y}\cap Sing(Y)=\emptyset$ and the angle between $Y(x)$ and $T_xS_Y$ is bigger than $\alpha>0$, for every $x\in S_Y$.
        \item The interior of $\Delta_Y$ intersects the any forward orbit in $M\setminus \cup_{\sigma\in Sing(Y)} W^s_{loc}(\sigma)$. 
        \item For every $\sigma\in Sing(Y)$, $S_Y\cap W^s_{loc}(\sigma)$ and $S_Y\cap W^u(\sigma)$ are a one-codimensional spheres inside $W^s_{loc}(\sigma)$ and  $W^u_{loc}(\sigma)$, respectively.
    \end{itemize}
 Hereafter, the sets $S_\sigma$ and $S_Y$ will be called the \textit{adapted cross-sections of $Y$}.

Now, since $M$ is a compact manifold, there is a well-defined injectivity radius $e_0>0$ for the exponential map $\exp_x$, for every $x\in M$. For every regular point $x\in C(\sigma_Y)$ and $\delta>0$, we say that $D_{\delta}(x)\subset M$ is a {\it center-unstable  section disc} (or simply {\it cu-section disc}) of $Y$, if it is given by $\exp_x(B^{c-1}_{\delta}(0))$, where $B^{c-1}_{\delta}(0)\subset T_xM$ is the $c-1$-dimensional ball centered at $x$ and radius $\delta$, and $T_yD_{\delta}(x)\oplus \langle Y(y)\rangle\subset \C_y^c(a)$ for every $y\in D_{\delta}(x)$.  Let $e_0>0$ be the injectivity radius of $\exp$. Fix $0<\varepsilon<e_0$ so that $B_{\varepsilon}(C(\sigma_Y))\subset U$, where $Y\in\mathcal{U}$ and $U\subset M$ are given by Theorem \ref{Teohtop}. Given $T>0$, we can shrinking $U$ in a such way that $Y_{[0,T]}(x)\in U$. The next lemma shows that these cu-section discs expands, in a certain way, along the orbit of points of $C(\sigma_Y)$.

\begin{lemma}\label{lem:bigdiscs}
    Suppose we are under the assumptions    of Theorem \ref{Teohtop}. Let $Y\in \mathcal{U}$ and fix  $\varepsilon>0$ so that $B_{\varepsilon}(C(\sigma_Y))\subset U$. Then, for every regular point $x\in C(\sigma_Y)$, $\delta>0$, and a $cu$-section disc $D_{\delta}(x)$, with $diam(D_{\delta}(x))\leq\delta$, containing $x$, there is $T'=T'(x,\delta)>0$ such that $Y_{t}(D_x)$ contains a $cu$-disc section $D_{\varepsilon}(Y_t(x))$, for every $t\geq T'$.
\end{lemma}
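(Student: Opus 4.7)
The plan is to combine the forward invariance of the central cone $\C^c(a)$ from Lemma~\ref{SHneighborhood} with the sectional expansion of $F$ to run a quantitative inverse function theorem showing that, for large $t$, the image $Y_t(D_\delta(x))$ is, near $Y_t(x)$, a graph over the cu-base which extends past the $\varepsilon$-ball.

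First I would fix, via Lemma~\ref{SHneighborhood}, a uniform cone $\C^c(a)$ on $U$ and a time $T_0>0$ such that $\Phi_s(\C^c_y(a))\subset \C^c_{Y_s(y)}(a)$ whenever $y\in U$, $s\ge T_0$, and the orbit segment $Y_{[0,s]}(y)$ stays in $U$. Because $D_\delta(x)$ is a $cu$-section disc, the subspace $T_yD_\delta(x)\oplus\langle Y(y)\rangle$ lies in $\C^c_y(a)$ for every $y\in D_\delta(x)$; cone invariance then guarantees that the tangent plane $T_{Y_t(y)}Y_t(D_\delta(x))$, together with $\langle Y(Y_t(y))\rangle$, remains in $\C^c_{Y_t(y)}(a)$ as long as the orbit has not left $U$. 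In particular, at $Y_t(x)\in C(\sigma_Y)$ the tangent to $Y_t(D_\delta(x))$ projects onto the $(c-1)$-dimensional cu-base $F_{Y_t(x)}\cap N_{Y_t(x)}$ without degeneracy, and the projection has inverse norm bounded independently of $t$ by a constant depending only on the cone parameter $a$.

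Second, the sectional expansion of $F$ (carried over to $\C^c(a)$ in the last part of the proof of Lemma~\ref{SHneighborhood}) yields uniform $K',\lambda'>0$ with $|\det\Phi_s|_L|\ge K'e^{\lambda's}$ for every two-plane $L\subset \C^c(a)$. Applied to the two-plane spanned by any transverse direction $v\in \C^c(a)\cap N$ and the flow direction $\langle Y\rangle$, and after dividing by the growth of the flow direction (which is bounded and bounded below away from the singularities), this forces $\|\psi_s(v)\|$ to grow at a uniform rate $\lambda''>0$ along the orbit of $x$. Equivalently, the scaled linear Poincar\'e flow $\psi^*_s$ uniformly expands $\mathcal{F}\subset N$ along the orbit of $x$. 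Now set $\tilde G_t=\pi_{cu}\circ \exp_{Y_t(x)}^{-1}\circ Y_t\circ \exp_x$ as a smooth map from $B^{c-1}_\delta(0)\subset T_xD_\delta(x)$ into $F_{Y_t(x)}\cap N_{Y_t(x)}$, where $\pi_{cu}$ projects along $\mathcal{E}_{Y_t(x)}\oplus\langle Y(Y_t(x))\rangle$. Step one makes $\pi_{cu}$ an almost-isometry on $T_{Y_t(x)}Y_t(D_\delta(x))$, while step two gives $m(D\tilde G_t(0))\ge ce^{\lambda'' t}$. A quantitative inverse function theorem (applied iteratively in bounded-time blocks to keep the $C^2$-distortion under control) then shows that $\tilde G_t(B^{c-1}_\delta(0))$ contains a ball of radius comparable to $\delta e^{\lambda'' t}$ around the origin. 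Choosing $T'=T'(x,\delta)$ with $\delta e^{\lambda'' T'}$ exceeding $\varepsilon$ by a definite factor yields $D_\varepsilon(Y_t(x))\subset Y_t(D_\delta(x))$ for every $t\ge T'$.

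The main obstacle will be the passage of the orbit $Y_{[0,t]}(x)$ near singularities of $Y$: there the flow slows down, the cross-sections $\mathcal{N}_{Y_t(x)}$ degenerate, and the bare $\Phi_t$-estimates lose uniformity. To handle this I would work throughout with the scaled linear Poincar\'e flow $\psi^*_t$ (as in Lemma~\ref{thm: homoclinicclass}), which absorbs the vanishing flow speed and preserves uniform expansion rates on $\mathcal{F}$; combined with the Lorenz-like structure of singularities (Lemma~\ref{lemma: hypsing}) and the flow-norm control of item (a) in the proof of Lemma~\ref{connection2}, this makes it possible to absorb the time spent near singularities into the choice of $T'$ without destroying the expansion estimate used in the inverse function argument.
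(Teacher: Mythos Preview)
Your outline is workable, but it is considerably more elaborate than what the paper does and introduces a difficulty the paper sidesteps entirely.

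The paper's argument is direct. Immediately before the lemma it fixes finitely many cross sections for $Y$ lying uniformly away from $Sing(Y|_{C(\sigma_Y)})$; on these sections the flow speed satisfies $\alpha_1\le\|Y\|\le\alpha_2$ and the angle with the section is bounded below by some $\beta>0$. After moving the base point to such a cross section and observing cone invariance (exactly your first step), the paper takes any unit $v\in T_yD$, sets $L_y=\mathrm{span}\{Y(y),v\}$, and writes
\[
\|\Phi_{t_0}(v)\|\;=\;K\cdot\frac{\det(\Phi_{t_0}|_{L_y})\cdot\|Y(y)\|\sin\theta}{\|Y(Y_{t_0}(y))\|}\;\ge\;C''e^{\lambda t_0},
\]
using sectional expansion of $F$ for the determinant and the cross-section bounds for the flow-speed ratio. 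This immediately gives $\mathrm{diam}\,Y_t(D)\ge\varepsilon$ for $t\ge t_0$; the case where $Y_t(D)$ exits $B_\varepsilon(C(\sigma_Y))$ is handled separately and trivially. No inverse function theorem, no iterated blocks, no $\psi^*_t$.

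Your concern about passages near singularities is exactly what the cross-section framing eliminates: the flow-speed bounds are needed only at the \emph{endpoints} of the time interval (both on cross sections), while the determinant estimate $|\det\Phi_t|_L|\ge Ce^{\lambda t}$ holds throughout $U$ regardless of how close the orbit gets to a singularity in between. So the $\psi^*_t$ and Liao-type scaling you propose are valid but unnecessary here. What your route does buy, via the inverse function argument, is the honest conclusion that $Y_t(D_\delta(x))$ contains a full $\varepsilon$-disc centred at $Y_t(x)$; the paper literally proves only $\mathrm{diam}\,Y_t(D)\ge\varepsilon$, which is weaker than the stated conclusion but suffices for the use made of this lemma in Lemma~\ref{densestable}.
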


\begin{proof}
Let $e_0>0$ be the injectivity radius of $\exp$. Fix $0<\varepsilon<e_0$ so that $B_{\varepsilon}(C(\sigma_Y))\subset U$, and let consider $0<\delta<\varepsilon$. Let $S_Y$ be an adapted cross-section for $Y$. Thus, for every regular point $x\in C(\sigma_Y)$ there are $\sigma \in Sing(X)$ $S_{\sigma}$ and $t_x'>0$ such that $Y_{t_x'}(x)\in S_\sigma=S_x$. In particular, there are $\alpha_1,\alpha_2,\beta>0$ such that $\alpha_1\leq|Y(Y_{t_x'}(x))|\leq \alpha_2$, and the angle between $Y(Y_{t_x'}(x))$ and $S_x$ is bigger than $\beta$. Therefore, 
we have $DY_{t}(\C^c(y))\subset \C^c(Y_{t}(y))$, for every $y\in D_{\delta}(x)$ and every $t>0$ satisfying $Y_t(y)\subset U$. Hence, $$D_{r_t}(Y_t(x))=Y_t(D_{\delta}(x))\cap \exp_{Y_t(x)}B_{\varepsilon}(0),$$ where $t>0$ and $0<r_t\leq \varepsilon$, is a $cu$-section disc of $Y$. In other words, the $cu$-section discs are invariant for the flow. 

Now, without loss of generality we assume that $t_x'=0$, $D=D_{\delta}(x)\subset S_x$ for some cross section $S_x$ for $Y$. Note that, by definition, $D$ is a connected set. Then, if there is $y\in D$ and $t>0$ such that $Y_t(y)\notin B_{\varepsilon}(C(\sigma_Y))$, there is a connected component $D'\subset D$ such that $diam\; Y_t(D')\geq \varepsilon$, so the result is obtained in this case. 

Assume that $Y_t(D)\subset B_{\varepsilon}(C(\sigma_Y))$ for every $t>0$. Let $y\in D$, $v\in T_{y}D$ be a unit vector, and denote by $L_y$  the parallelogram formed by $\langle Y(y)\rangle$ and $v$. Take $t_0>0$ such that $C''\sin(\beta)e^{\lambda t_0}\geq\frac{\varepsilon}{\delta}$, where $C''=C'\frac{\alpha_1}{\alpha_2}\sin(\beta)>0$ and $C'=CK$, where $C,K>0$ are uniform constants given by sectional-hyperbolicity and the choice of the cross sections, and $Y_{t_0}(x)$ belongs to some cross section $S'$ for $Y$. Therefore, we have that 
\begin{eqnarray*}
    \Vert \Phi_{t_0}(v)\Vert&=&K\cdot\frac{Area(\Phi^Y_{t_0}(L_y))}{\Vert Y(Y_{t_0}(x))\Vert}\\
    &=&K\cdot\frac{\det(\Phi^Y_{t_0}(x))\cdot Area(L_y)}{\Vert Y(Y_{t_0}(y))\Vert}\\
    &\geq& KCe^{\lambda t_0} \cdot \frac{|| Y(y)||\cdot\sin(\theta)}{\Vert Y(Y_{t_0}(y))\Vert}\\
    &\geq&C'\frac{\alpha_1}{\alpha_2}\sin(\beta)e^{\lambda t_0}\\
    &=& C''e^{\lambda t_0}\geq \frac{\varepsilon}{\delta},
\end{eqnarray*}
where  $\theta$ is the angle between $Y(y)$ and $v$. So,  
 $diam\;Y_{t}(D)\geq \frac{\varepsilon}{\delta}\cdot diam\;D=\varepsilon$, for every $t\geq t_0$. 
This concludes the proof.
\end{proof}

\begin{lemma}\label{densestable}
    Under the assumption of Theorem \ref{Teohtop}, $W^s(\gamma_Y)$ is dense on $C(\sigma_Y)$.
\end{lemma}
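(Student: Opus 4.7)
The plan is to mirror the argument of Lemma \ref{lem: stable}, with cu-section discs playing the role of strong stable manifolds and forward iteration (driven by the expansion in Lemma \ref{lem:bigdiscs}) replacing backward iteration and strong-stable contraction. Fix $Y\in \mathcal{U}$, $x\in C(\sigma_Y)$, and a neighborhood $V_x$ of $x$. I may assume $x$ is regular, since $C(\sigma_Y)$ contains only finitely many hyperbolic singularities by Lemma \ref{lemma: hypsing}, so regular points are dense in it. I would choose a small cu-section disc $D\subset V_x$ through $x$. By Lemma \ref{lem:bigdiscs}, there is $T>0$ such that $Y_t(D)\supset D_\varepsilon(Y_t(x))$ for every $t\geq T$, where $\varepsilon>0$ is uniform. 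By the flow-invariance of $W^s(\gamma_Y)$, it then suffices to find $t\geq T$ and a point $z\in D_\varepsilon(Y_t(x))\cap W^s(\gamma_Y)$, for then $Y_{-t}(z)\in D\subset V_x$ lies in $W^s(\gamma_Y)$.

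To produce this intersection, I first observe that $\gamma_Y$ itself admits a transverse homoclinic intersection. Indeed $\gamma_Y\cap W^u(Sing(Y))=\emptyset$: otherwise $\gamma_Y\subset W^u(\sigma)$ for some singularity $\sigma$, forcing $\alpha(p)=\{\sigma\}$ for $p\in\gamma_Y$ and contradicting periodicity. Thus Lemma \ref{stablem} applied to any $p\in\gamma_Y$ produces a point of $W^{ss}_{loc}(p)\pitchfork W^u(\gamma_Y)$, which lies in $W^s(\gamma_Y)\pitchfork W^u(\gamma_Y)$; by Birkhoff--Smale this gives a horseshoe inside $C(\sigma_Y)$ accumulating $\gamma_Y$. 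I then split into two cases. If $x\in W^s(Sing(Y))$, then $Y_t(x)\to\sigma$ for some singularity $\sigma\in C(\sigma_Y)$, and the cu-disc $D_\varepsilon(Y_t(x))$ should converge to a cu-disc through $\sigma$ tangent to the center cone at $\sigma$; combining Lemma \ref{connection1} with a local product argument near $\sigma$ then yields the transverse intersection with $W^s(\gamma_Y)$. If instead $x\notin W^s(Sing(Y))$, I plan to use the density of $W^u(\gamma_Y)$ in $C(\sigma_Y)$ (Lemma \ref{lem: stable}) together with the inclination lemma at the transverse homoclinic point: for sufficiently large $t$, the uniform-size cu-disc $D_\varepsilon(Y_t(x))$ must transversely cross $W^s_{loc}(q)$ for some $q\in\gamma_Y$, giving the desired intersection.

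The hardest part is this last case. Since Lyapunov stability is not assumed, the forward orbit of $x$ need not accumulate on $\gamma_Y$, so one cannot simply wait for $Y_t(x)$ to approach $\gamma_Y$ and invoke the uniform size of local stable manifolds. The plan is to push the cu-disc far enough forward so that the combination of its uniform $\varepsilon$-width and the density of $W^u(\gamma_Y)$ in $C(\sigma_Y)$ enables the inclination lemma at the horseshoe produced above to force a transverse crossing of $W^s(\gamma_Y)$; the conclusion then follows by pulling back to $V_x$ via $Y_{-t}$.
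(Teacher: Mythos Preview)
Your setup---growing a cu-section disc through $x$ via Lemma~\ref{lem:bigdiscs} and then pulling an intersection with $W^s(\gamma_Y)$ back to $V_x$---is exactly right and matches the paper. The gap is in how you locate that intersection.

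In your Case~1 you invoke Lemma~\ref{connection1}, but that lemma gives $W^s(\sigma_Y)\pitchfork W^u(\gamma_Y)\neq\emptyset$, which is the \emph{wrong} direction: it produces points flowing forward into $\sigma$ and backward to $\gamma_Y$, and says nothing about $W^s(\gamma_Y)$ near $\sigma$. In your Case~2 the proposed mechanism is not sufficient either. Density of $W^u(\gamma_Y)$ only places points of $W^u(\gamma_Y)$ near $Y_t(x)$, but both $W^u(\gamma_Y)$ and your cu-disc are tangent to the center cone $\C^c$, so this proximity does not yield a transverse crossing with anything stable. The inclination lemma at the horseshoe only controls discs that are already near $\gamma_Y$; since you correctly note that $Y_t(x)$ need not approach $\gamma_Y$, you have no way to bring your disc into the region where that lemma applies.

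The paper closes this gap by an entirely different device: it uses the \emph{chain-transitivity} of $C(\sigma_Y)$. After growing the disc to size $\varepsilon$ at $x_t=Y_t(x)$, it fixes $\eta>0$ small enough that any $\varepsilon$-sized cu-disc centered at a point $\eta$-close to a point $z\in C(\sigma_Y)$ must meet $W^s_{loc}(q)$ transversely for every $q$ in a small cu-disc at $z$ (a local product-structure fact coming from the uniform angle between $E$ and $F$). It then takes an $\eta$-chain $(x_i,t_i)$ inside $C(\sigma_Y)$ from $x_t$ to $p\in\gamma_Y$ and propagates the intersection with $W^s(\gamma_Y)$ \emph{backward along the chain}: starting with $W^s_{loc}(\gamma_Y)\pitchfork D_\varepsilon(Y_{t_{k-1}}(x_{k-1}))$, one shrinks under $Y_{-t_i}$ and uses the $\eta$-closeness at each step to land inside the previous $\varepsilon$-disc, eventually reaching $Y_t(D_{\delta_0})$. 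No case split on $W^s(Sing(Y))$ is needed. This chain-propagation step is the idea missing from your plan.
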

\begin{proof}
Let $Y\in \mathcal{U}$, $x\in C(\sigma_Y)$ and let $V$ be a neighborhood of $x$. We will prove that $W^{s}(\gamma_Y)\cap V\neq \emptyset$. First, we assume that $x$ is regular. In this case, fix  $0<\varepsilon<e_0$ such that $B_{\varepsilon}(C(\sigma_Y)\subset U$, let consider a $cu$-section disc $D=D_{\delta}(x)$, $0<\delta<\varepsilon$, such that $D\subset V$, and let $T'=T'(x,\delta)>0$ be given by Lemma \ref{lem:bigdiscs}. According to the choice of the neighborhood $U$ and the invariance of the $cu$-discs, there is $\delta_0>0$ such that the disc $D_{\delta_0}=\bigcup_{\vert t\vert<\delta_0}Y_t(D)$, of dimension $dim\;F$, is tangent to $\C^c$ and it is contained in $V$. Moreover, by following the proof of Lemma \ref{lem:bigdiscs}, one has that the inner radius of $Y_t(D_{\delta_0})$ increases with $t>0$.   

Second, recall that the angle between $E$ and $F$ is uniformly bounded away from zero. So, up to possibly reducing $\varepsilon$, there is $\eta>0$ satisfying the following property: If $y,z\in C(\sigma_Y)$ satisfy  
$d(y,z)\leq \eta$ and $D(y)$, $D(z)$ are discs tangent to the center unstable cone $\C^c$ with inner radius $\varepsilon$ and $0<r_z<\varepsilon/2$ respectively, then
\begin{equation}\label{intersectiondiscs}
W^s_{loc}(q)\pitchfork D(y)\neq\emptyset,\quad \forall  q\in D(z).   
\end{equation}   

Now, let $t>0$ such that the inner radius of $Y_t(D_{\delta_0})$ contains a $cu$-section disc at $x_t=Y_t(x)$ with inner radius $\varepsilon$. Since $C(\sigma_Y)$ is a chain-recurrent class, it is also chain-transitive. Consequently, for every $\eta>0$ there is a $\eta$-chain $(x_i,t_i)_{i=0}^k$, with $t_i\geq 1$, such that $x_0=x_t$, $x_k=p\in\gamma_Y$, $d(Y_{t_i}(x_i),x_{i+1})\leq \eta$, and $x_i\in C(\sigma_Y)$ for every $i=0,..,k$. Then, if we take a $cu$-section disc $D_{\varepsilon}(Y_{t_{k-1}}(x_{k-1}))$,  by the choice of $\eta$ and \eqref{intersectiondiscs} we obtain a point $z_{k}$ in $W^s_{loc}(\gamma_Y)\pitchfork D_{\varepsilon}(Y_{t_{k-1}}(x_{k-1}))\neq\emptyset$. Moreover, since the inner radius of $Y_{-t_i}(D_{\varepsilon}(Y_{t_i}(x_i)))$ is less than $\varepsilon/2$ for every $i=1,\ldots, k-1$, we obtain in a recursive way, points $z_{i}$ in $W^s_{loc}(\gamma_Y)\pitchfork D_{\varepsilon}(Y_{t_i}(x_i))$. The Figure 3 illustrates how these points are obtained. 
\begin{figure}[ht]
     \centering
 \includegraphics[scale=0.4]{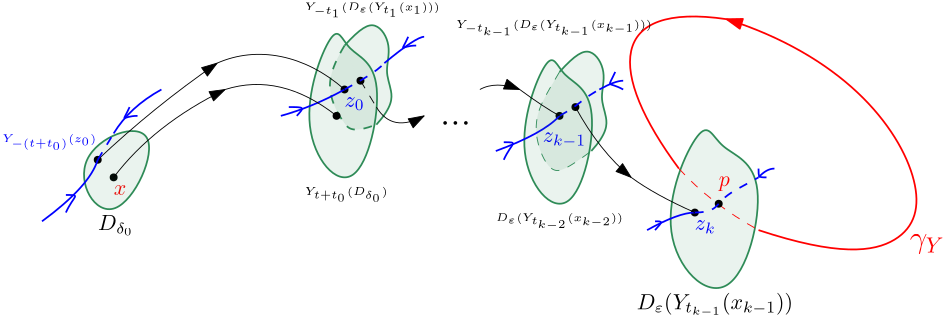}
   \caption{Proof of Lemma \ref{densestable}.}
    \label{fig:my_label}\label{fig: robustperiodic}
 \end{figure}

In this way, since the inner radius of $Y_{t+t_0}(D_{\delta_0})$ is at least $\varepsilon$, we have by definition of $\eta$ a point $z_0\in W^s_{loc}(\gamma_Y)\pitchfork Y_{t+t_0}(D_{\delta_0})$. So, the point $Y_{-(t+t_0)}(z_0)\in W^s(\gamma_Y)\cap D_{\delta_0}\subset V$. 

Finally, if $\rho$ is a singularity of $C(\sigma_Y)$ and $V\subset M$ is an open set containing $\rho$, there is a regular point $x\in C(\sigma_Y)$ and an open neighborhoo $V_x$ of $x$ satisfying $V_X\subset V$. So, by the above argument, there is a point $z\in W^s(\gamma_Y)\cap V_x\subset V$. This concludes the proof. 
\end{proof}

Next, we are ready to prove the Main Theorem.

\begin{proof}[Proof of the Main Theorem]
Consider the residual set $\mathcal{R}$ given by Lemma \ref{lemm: genericsets}. Let $X\in \mathcal{R}$ and let $\Lambda$ be a non-trivial sectional-hyperbolic chain-recurrent of $X$ which is not reduced to either a homoclinic loop or a saddle connection between singularities. 

By Theorem \ref{Teohtop} we have that $\Lambda$ is a homoclinic class. Consequently, there is a periodic orbit $\gamma\subset \Lambda$ whose stable manifold is dense in $\Lambda$ and meets transversally every submanifold of dimension $\dim(F)$ that is tangent to $\C^c$, has inner radius $\varepsilon$, and intersects a sufficiently small neighborhood $U' \subset U$ of $\Lambda$. Then, following the arguments in the proof of \cite[Theorem 4.1]{CY}, we can see that this property holds for vector fields $Y$ in a small $C^1$-neighborhood $\mathcal{U}_X$ of $X$. By Theorem \ref{Teohtop} we also have that $\Lambda_Y$ contains periodic orbits. Moreover, for every $\gamma'\subset \Lambda_Y$ and a neighborhood $U_{\gamma'}\subset U'$ of $\gamma'$, one has that $W_{\varepsilon}^u(\gamma')$ is a submanifold tangent to $\C^c$ with inner radius $\varepsilon$. Thus, by Lemma \ref{lemamatador} 
we have $W^s(\gamma_Y)\cap W_{\varepsilon}^u(\gamma')\neq\emptyset$. In particular, this shows that the set of transverse intersections $W^s(\gamma_Y)\pitchfork W^u(\gamma')$ is dense in $ W^u(\gamma')$.

Therefore, if $x\in \Lambda_Y$ and $V_x\subset U$ is a neighborhood of $x$, by Lemma \ref{lemamatador} there is a point $y\in W^u(\gamma')$ for some periodic orbit $\gamma'\subset \Lambda_Y$, which in turns is accumulated by elements of $W^s(\gamma_Y)\pitchfork W^u(\gamma')$. In other words, $\Lambda_Y$ is robustly  a homoclinic class. This proves the result.  
\end{proof}




\textbf{Acknowledgments.} The authors would like to thank some colleagues for valuable comments that helped them to improve the presentation of this work. Precisely, we thank Professor Alexander Arbieto and Professor Fan Yang for the careful reading of this manuscript and for pointing out several issues in an earlier version of this work. K. J. Vivas was supported by ANID Proyecto FONDECYT Iniciaci\'on 11250633, Chile.
\begin{table}[h]
\begin{tabularx}{\linewidth}{p{1.5cm}  X}
\includegraphics [width=1.8cm]{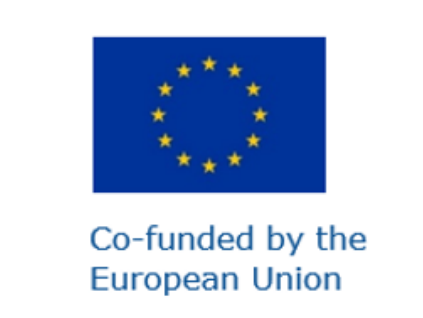} &
\vspace{-1.5cm}
This research is part of a project that has received funding from
the European Union's European Research Council Marie Sklodowska-Curie Project No. 101151716 -- TMSHADS -- HORIZON--MSCA--2023--PF--01.\\
\end{tabularx}
\end{table}

\address

\end{document}